%% hyp-links.tex

\documentclass[11pt]{amsart}

\usepackage{amsfonts, amstext, amsmath, amsthm, amscd, amssymb}
\usepackage{epsfig, graphics, overpic, enumerate}
\usepackage{color}
%%  If the following line is uncommented, we see the labels of theorems, 
%% figures, etc. in the margins.
%% \usepackage[notref,notcite]{showkeys}

 %% JSP: To fix marginpar
\setlength{\marginparwidth}{0.8in}
\let\oldmarginpar\marginpar
\renewcommand\marginpar[1]{\oldmarginpar[\raggedleft\footnotesize #1]%
{\raggedright\footnotesize #1}}

\textwidth 5.9in 
\textheight 8.00in 
\oddsidemargin 0.18in
\evensidemargin 0.18in

\renewcommand{\setminus}{{\smallsetminus}}

%%% Symbols %%%

\newcommand{\RR}{{\mathbb{R}}}

\newcommand{\bdy}{{\partial}}
\newcommand{\guts}{{\rm guts}}
\newcommand{\cut}{{\backslash \backslash}}
\newcommand{\clock}{{\phi}}
\newcommand{\vol}{{\rm vol}}

\newcommand{\GA}{{\mathbb{G}_A}}
\newcommand{\GB}{{\mathbb{G}_B}}
\newcommand{\G}{{\mathbb{G}}}
\newcommand{\GRA}{{\mathbb{G}'_A}}
\newcommand{\GRB}{{\mathbb{G}'_B}}

\def\co{\colon\thinspace}

\theoremstyle{plain}
\newtheorem{theorem}{Theorem}[section]
\newtheorem{corollary}[theorem]{Corollary}
\newtheorem{lemma}[theorem]{Lemma}

\newtheorem{conjecture}[theorem]{Conjecture}

\newtheorem*{namedtheorem}{\theoremname}
\newcommand{\theoremname}{testing}

\theoremstyle{definition}
\newtheorem{define}[theorem]{Definition}
\newtheorem{remark}[theorem]{Remark}
\newtheorem{convention}[theorem]{Convention}

\newtheorem{example}[theorem]{Example}

%% ------------------------------------------------------------

\begin{document}
\title{Hyperbolic  semi-adequate links}
\author[D. Futer]{David Futer}
\author[E. Kalfagianni]{Efstratia Kalfagianni}
\author[J. Purcell]{Jessica S. Purcell}

\address[]{Department of Mathematics, Temple University,
Philadelphia, PA 19122, USA}

\email[]{dfuter@temple.edu}

\address[]{Department of Mathematics, Michigan State University, East
Lansing, MI 48824, USA}

\email[]{kalfagia@math.msu.edu}

\address[]{ Department of Mathematics, Brigham Young University,
Provo, UT 84602, USA}

\email[]{jpurcell@math.byu.edu }
\thanks{{D.F. is supported in part by NSF grants DMS--1007221 and DMS--1408682.}}

\thanks{{E.K. is supported in part by NSF grants DMS--1105843 and DMS--1404754.}}

\thanks{{J.P. is supported in part by NSF grants DMS--1007437, DMS--1252687, and a
    Sloan Research Fellowship.}}

\thanks{ \today}

\begin{abstract}
We provide a diagrammatic criterion for semi-adequate links to be hyperbolic. We also give a conjectural description of the satellite structures of semi-adequate links. One application of our result is that the closures of sufficiently complicated positive braids are hyperbolic links.
\end{abstract}

\maketitle

\section{Introduction}

The problem of determining the geometric structure of a link
complement from a link diagram  is both important and hard.
A related, similarly difficult, problem asks for relations between
geometric and diagrammatic invariants of a link.
The purpose of this paper is to discuss these problems for the class of
\emph{semi-adequate} links.  We give
diagrammatic criteria for such links to be hyperbolic, and state a
conjecture about their satellite structures.  Semi-adequate links form
a very broad class of links that first appeared in the study of
Jones--type invariants \cite{lick-thistle, thi:adequate}, and have since
been studied considerably from the point of view of both quantum topology
and geometric topology; see \cite{fkp:survey} and references therein.

In  \cite{fkp:gutsjp}, we
developed a framework for establishing relations between geometric and
combinatorial link invariants. In particular,  to a semi-adequate link diagram we associate a certain  graph (\emph{state graph})
and a surface spanned by the link, and construct a certain ideal polyhedral decomposition of the surface complement.
We use normal surface theory to show that combinatorial properties  of the state graph dictate the structure of the
JSJ-decomposition of the surface complement, and encode geometric information of the link complement.
For instance we show that,  for hyperbolic semi-adequate links,
graph theoretic invariants  coarsely determine the volume of the link
\cite{fkp:gutsjp} and the geometric types of certain essential surfaces in the link complement  \cite{fkp:qsf}.
The machinery of \cite{fkp:gutsjp} lends itself naturally
to the study of essential surfaces in link complements via
normal surface theory.

 In this paper, we focus on
essential tori and annuli in link complements,
and give a diagrammatic  criterion that rules them out, implying the link is hyperbolic.
For links that fail this criterion, we give a 
conjectural description of the satellite structures.
Our results place several known classes of hyperbolic links under a
common umbrella and lead to new constructions of such links.

To  state our results, we need to briefly explain the related  terminology; 
we give precise definitions in Section~\ref{sec:background}. For every 
semi-adequate link diagram  there is a corresponding state graph $\G$. 
The edges of $\G$ are in one-to-one correspondence with the crossings 
of the link diagram. One way to obtain 2--edge loops in $\G$ is from 
crossings in the same twist region of the link diagram, where edges of 
$\G$ corresponding to crossings of that twist region are parallel between 
two vertices of $\G$. Our result concerns link diagrams for which all the 
2--edge loops of $\G$ are obtained this way.

\begin{theorem}\label{thm:main}
Suppose that $D(K)$ is a connected, prime, semi-adequate diagram with 
at least two twist regions, such that for each 2--edge loop in the 
corresponding state graph, the edges belong to the same twist region. 
Then the link $K$ depicted by this diagram is hyperbolic.
\end{theorem}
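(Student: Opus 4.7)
The plan is to apply Thurston's hyperbolization theorem by verifying that $S^3 \setminus K$ is irreducible, atoroidal, anannular, and not Seifert fibered. Several of these conditions should be straightforward: connectedness of $D(K)$ implies $K$ is non-split, primeness of $D(K)$ implies $K$ is a prime link (so $S^3 \setminus K$ is irreducible), and the hypothesis of at least two twist regions rules out $K$ being a $(2,q)$-torus link (whose semi-adequate diagrams have a single twist region), which will eliminate the Seifert fibered case at the end.

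The main task is ruling out essential tori and essential annuli. For this, I would exploit the state surface $S$ spanned by $K$ (essential in $S^3 \setminus K$ by semi-adequacy) together with the ideal polyhedral decomposition of the cut-open manifold $M_S := (S^3 \setminus K) \cut S$ developed in \cite{fkp:gutsjp}. Given an essential torus or annulus $F$ in $S^3 \setminus K$, standard normal surface theory lets me isotope $F$ so that $F \cap S$ consists of essential arcs and circles on $S$, while each component of $F \cap M_S$ is a normal disk in one of the ideal polyhedra. The crucial combinatorial fact is that the boundary edges of these normal disks correspond (via duality) to edges of the state graph $\G$, and a bigon normal disk corresponds precisely to a 2-edge loop of $\G$.

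At this point the hypothesis on 2-edge loops enters the argument. By assumption, every 2-edge loop of $\G$ is confined to a single twist region, so every bigon face of a normal disk of $F$ is localized inside one twist region. The plan is then to argue by case analysis that an essential $F$ cannot propagate across multiple twist regions: an arc of $F \cap S$ exiting one twist region and entering another would force a normal disk whose bigon sides realize a 2-edge loop spanning distinct twist regions, contradicting the hypothesis. Consequently $F$ must be confined to a neighborhood of a single twist region, where a direct topological analysis (using that a twist region is a product $\text{(disk with two holes)} \times I$ up to standard identifications) shows $F$ is either compressible or boundary parallel, contradicting essentiality.

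The hard part will be carrying out this normal-surface case analysis completely, particularly for essential annuli, which admit many configurations: annuli joining distinct cusps, annuli wrapping around a twist region, or annuli arising as boundaries of regular neighborhoods of essential M\"obius bands. One must carefully track how normal disks glue across the faces of the polyhedral decomposition and verify that the restriction on bigons (and hence on 2-edge loops) genuinely eliminates all such configurations. Once atoroidality and anannularity are established, combined with the non-Seifert-fibered condition from the two-twist-region hypothesis, Thurston's hyperbolization theorem yields that $K$ is hyperbolic.
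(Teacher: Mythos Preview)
Your overall architecture is right --- use the state surface, the polyhedral decomposition of $M_A = (S^3\setminus K)\cut S_A$, and normal surface theory to rule out essential tori, then invoke Thurston --- but two of the steps you describe as routine are not, and one of your key combinatorial claims is wrong.

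First, the normal surface picture. In the polyhedral decomposition of $M_A$ the polyhedra are \emph{prime}, so there are no normal bigons at all; an essential annulus in $M_A$ is cut by the white faces into normal \emph{squares}, not bigons (Lemma~\ref{lemma:tori-cutto-annuli}). So your assertion that ``a bigon normal disk corresponds precisely to a 2--edge loop of $\G$'' does not describe anything that occurs. The 2--edge loop hypothesis enters much later and much more delicately: one splits the annuli into \emph{diagrammatically incompressible} and \emph{diagrammatically compressible} types (Definition~\ref{def:diag-comp}). The incompressible ones are handled without the 2--edge loop condition by a Seifert-invariant computation (Lemma~\ref{lemma:incomprannuli}). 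For the compressible ones, the 2--edge loop condition is used to pin down EPDs to two very specific local pictures (Lemma~\ref{lemma:EPD-typeB}, via the seven-type classification of \cite[Theorem~6.4]{fkp:gutsjp}), and then a separate argument (Lemmas~\ref{lemma:square-adjvertex}--\ref{lemma:lower-idealvertex}) forces every square to encircle a single ideal vertex, making the torus boundary-parallel. Nothing in this chain resembles ``$F$ is confined to a single twist region''; that localization statement is not what the hypothesis buys you, and I do not see how to make it true.

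Second, the Seifert fibered case. You write that having at least two twist regions rules out $K$ being a $(2,q)$ torus link, ``which will eliminate the Seifert fibered case.'' But there are many other atoroidal Seifert fibered link complements --- every torus knot, for instance --- and nothing you have said excludes them. The paper handles this with a genuinely different argument (Theorem~\ref{thm:SFL}): one uses $\chi(\guts) = -\chi(\GRA)$ and Agol's inequality to get $\|S^3\setminus K\| \geq -2\chi(\GRA)$, so Seifert fibered forces $\chi(\GRA)\geq 0$; a Turaev genus count then forces $g(D)=0$, hence $D$ is alternating, and Menasco finishes. Your alternative route would be to prove anannularity directly (atoroidal $+$ anannular $\Rightarrow$ hyperbolic), but the paper does not do this, and your sketch for annuli has the same problems as above.

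Minor point: ``primeness of $D(K)$ implies $K$ is prime'' is not straightforward --- it is Corollary~\ref{cor:prime}, proved \emph{from} Theorem~\ref{thm:main}. Fortunately you do not need it: irreducibility of $S^3\setminus K$ already follows from $K$ being non-split.
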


Figure~\ref{fig:pretzel} shows an example of a connected, prime, semi-adequate diagram that doesn't satisfy
 the 2--edge loop hypothesis of Theorem \ref{thm:main}.
This diagram  represents
the $(-2,3,3)$ pretzel knot,  which is known to be equivalent to the $(3,4)$ torus knot, hence is not hyperbolic.
This shows that the  2--edge loop  condition is necessary for Theorem
 \ref{thm:main}. The class of links with semi-adequate diagrams that don't satisfy the 2--edge loop condition
is quite large, and contains plenty of hyperbolic knots (e.g.\ the $(-2,3,7)$ pretzel) and satellite knots
(see Example \ref{ex:satellite}).

\begin{figure}
\includegraphics{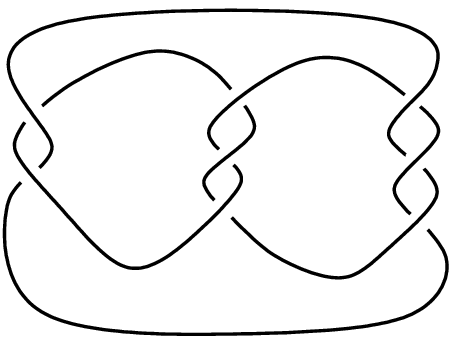}
\hspace{.2in}
\includegraphics{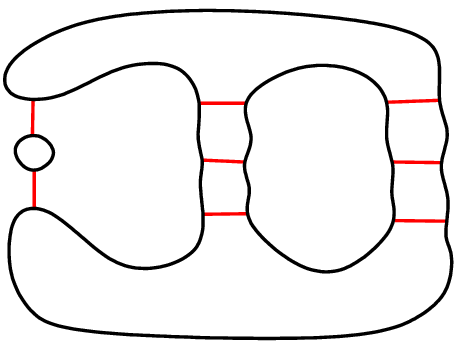}
\caption{Left: the $(-2,3,3)$ pretzel knot. Right: the graph of the all--$A$ resolution contains $2$--edge loops that do not belong to a single twist region.}
\label{fig:pretzel}
\end{figure}

Theorem \ref{thm:main} is reminiscent of a result of
Menasco, which states that any link admitting a connected, prime,
alternating diagram with at least two twist regions is hyperbolic
\cite{menasco:incompress}.  In fact, the hypotheses of Theorem
\ref{thm:main} apply in particular to prime, alternating diagrams.  In
this setting, the statement of the theorem reduces to Menasco's result.

In addition, Theorem \ref{thm:main} generalizes Menasco's result to
large classes of non-alternating links.  For a warm-up example,
consider the non-alternating pretzel link diagram of the form $P(a_1,
\dots, a_r, b_1, \ldots,b_s)$ that has $r$ vertical
bands containing $ a_1, \dots, a_r$ positive crossings, and $s$
vertical bands containing $b_1, \ldots,b_s$ negative crossings. See Figure~\ref{fig:pretzel} for the example of $P(-2,3,3)$, and
\cite[Figures 1 and 2]{lick-thistle} for the general case.  If $r, s\geq 3$ and $a_i,
b_i\geq 3$, for all $i$, then the diagrams satisfy the hypothesis of
Theorem \ref{thm:main}.  More generally, one may obtain families of
non-alternating Montesinos or arborescent  links by imposing similar restrictions on
the rational tangles involved. Note that a classification of hyperbolic arborescent links 
is already known
 by the work of Bonahon and Siebenmann
\cite{bonsieb:monograph, fg:arborescent}.

For a sample class of non-alternating links whose
hyperbolicity can be established for the first time using Theorem
\ref{thm:main}, consider the family of positive or negative closed
braids with at least $3$ crossings per twist region.  

\begin{corollary} \label{braids}
Let $B_n$ be the
braid group on $n$ strands, with $n \geq 3$, and let $\sigma_1,
\ldots, \sigma_{n-1}$ be the elementary braid generators. Let $b=\sigma_{i_1}^{r_1}\sigma_{i_2}^{r_2} \cdots \sigma_{i_k}^{r_k}$
be a braid in $B_n$.  
Suppose that either $r_j \geq 3$ for all $j$, or
else $r_j \leq -3$ for all $j$. Suppose moreover that the braid closure $D_b$ of $b$ is a
prime diagram.
Then the link $K$ depicted by this diagram is
hyperbolic.
\end{corollary}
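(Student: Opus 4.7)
The plan is to apply Theorem~\ref{thm:main} to the braid closure diagram $D_b$. After taking a mirror image if necessary---which preserves hyperbolicity and swaps positive with negative braids---I may assume $b$ is a positive braid with every $r_j \geq 3$. Primeness (and hence connectedness) of $D_b$ is given, so the remaining hypotheses of Theorem~\ref{thm:main} to verify are: (i) $D_b$ has at least two twist regions; (ii) $D_b$ is semi-adequate; and (iii) every $2$-edge loop in the corresponding state graph belongs to a single twist region.

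For (i), after reducing the braid word by merging adjacent equal-generator blocks so that $i_j \neq i_{j+1}$, the twist regions of $D_b$ correspond to the $k$ blocks. If $k = 1$ then $b = \sigma_{i_1}^{r_1}$, and since $n \geq 3$ at least one strand is disjoint from this twist; the closure is then split, contradicting primeness. Hence $k \geq 2$.

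For (ii) and (iii), I would analyze the all-$B$ state graph $\GB$ of $D_b$. Since every crossing of $b$ is positive, the $B$-smoothing is the bigon-creating (horizontal) resolution, and each twist region $\sigma_{i_j}^{r_j}$ contributes exactly $r_j - 1$ internal bigon state circles together with two arcs---one at the top, one at the bottom of the region---that glue into the rest of the diagram. In graph-theoretic terms, the $r_j$ crossings of this twist form a path of length $r_j$ in $\GB$ whose $r_j - 1$ interior vertices are the new bigons and whose two end vertices are the outer state circles meeting the region from above and below. Because $r_j \geq 3$, every vertex of this path is distinct, so no edge of $\GB$ is a loop; thus $D_b$ is $B$-adequate. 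Moreover, each bigon vertex belongs to exactly one twist region and its only neighbors in $\GB$ are its two path-neighbors. Hence any pair of parallel edges with a bigon endpoint lies entirely inside one twist region's path, where no two edges can share both endpoints. The only remaining possibility---parallel edges between two outer vertices---is impossible because $r_j \geq 2$ forces every crossing edge to have at least one bigon endpoint. Therefore $\GB$ has no $2$-edge loops at all, and (iii) holds vacuously. Theorem~\ref{thm:main} now applies to conclude that $K$ is hyperbolic.

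The main obstacle is the combinatorial analysis of $\GB$: one must rule out both the possibility that an outer state circle wraps around to meet a single crossing on both sides (which would create a loop in $\GB$) and the possibility that two distinct twist regions share the same pair of outer endpoints (which would create cross-region parallel edges). Both complications are cleanly blocked by the assumption $r_j \geq 3$, which guarantees that each twist region contributes a path of at least three edges with genuinely distinct bigon interior vertices separating its two outer endpoints.
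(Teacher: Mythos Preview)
Your proof is correct and follows the same strategy as the paper: verify the hypotheses of Theorem~\ref{thm:main} by showing that the state graph coming from the long (bigon-creating) resolution of each twist region is loop-free and in fact has no $2$--edge loops at all, so the $2$--edge loop condition holds vacuously. The paper's proof is a one-line assertion of these facts (labeling the relevant state the all--$A$ state rather than all--$B$, a harmless convention difference), whereas you supply the combinatorial details and also make explicit the check that $D_b$ has at least two twist regions.
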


Several other applications of Theorem \ref{thm:main} are
given by Giambrone \cite{giambrone}. For instance, he  proves that for a
sufficiently complicated braid $b$, the plat closure of $b$ is
hyperbolic.

The problem of determining the geometric structures of link
complements from link diagrams has been studied considerably in the
literature. In addition to the work of Menasco on alternating links,
Bonahon and Siebenmann \cite{bonsieb:monograph} classified the
geometric types of arborescent links and showed that with some
explicitly described exceptions, these links are hyperbolic.  See also
Futer and Gu\'eritaud \cite{fg:arborescent} for a direct proof.  Adams
showed that augmented alternating links are hyperbolic
\cite{adams:augmented}. He also showed that toroidally alternating links are
either composite, torus knots, or hyperbolic \cite{Adams-toroidally},
although determining which of the three occurs is difficult.   More recently,
Futer and Purcell showed that prime link diagrams in which each twist
region has at least six crossings represent either $(2, q)$ torus
links or hyperbolic links \cite{fp:twisted}. Purcell investigated
the geometric structures of certain families of links with multiply
twisted regions \cite{purcell2, purcell1}. For
similar results on other classes of knots and links, we refer the reader
 to Adams' survey paper \cite{Adams-survey}.

As a corollary of Theorem \ref{thm:main}, we conclude that the primality
of a link can be easily read off from a diagram.

\begin{corollary}\label{cor:prime}
Let $D(K)$ be a connected, semi-adequate diagram without nugatory
crossings.  Suppose that for each 2--edge loop in the corresponding
state graph, the edges belong to the same twist region of $D(K)$.
Then $K$ is a prime link if and only if $D(K)$ is prime.
\end{corollary}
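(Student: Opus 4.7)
The plan is to verify the two implications separately, using Theorem~\ref{thm:main} directly for one direction and a connect-sum decomposition argument for the other.

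For the direction ``$D(K)$ prime implies $K$ prime,'' I would split on the number of twist regions of $D(K)$. If $D(K)$ has at least two twist regions, Theorem~\ref{thm:main} immediately gives that $K$ is hyperbolic, and therefore prime. Otherwise, the absence of nugatory crossings forces $D(K)$ either to have no crossings, in which case $K$ is the unknot, or to consist of a single twist region with $|q|\geq 2$ crossings, in which case $K$ is a $(2,q)$--torus link. All of these links are prime.

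For the converse, I would argue the contrapositive. Assume that $D(K)$ is not prime, and let $\gamma$ be a simple closed curve in the projection plane meeting $D(K)$ transversely in two points with crossings on each side. Lifting $\gamma$ to a $2$--sphere $S$ in $S^3$ exhibits $K$ as a connect sum $K_1 \# K_2$, where each $D(K_i)$ is obtained by capping off the portion of $D(K)$ on one side of $\gamma$. Each $D(K_i)$ may be taken to be connected, and each inherits from $D(K)$ the properties of being reduced (no nugatory crossings), semi-adequate, and satisfying the $2$--edge loop condition. The essential observation is that the state graph $\G$ of a connect sum of diagrams is the one--point union (wedge) of the state graphs of the two factors, so semi--adequacy and the constraint on $2$--edge loops pass to each factor, and any nugatory crossing of a factor $D(K_i)$ would already be nugatory in $D(K)$. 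Iterating the splitting if necessary, I can further assume that each $D(K_i)$ is itself prime with at least one crossing, whence the forward direction applied to the factors shows each $K_i$ is a nontrivial prime link. Therefore $K$ is composite.

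The main obstacle will be the state--graph bookkeeping in the converse direction, namely verifying explicitly that splitting $D(K)$ along $\gamma$ preserves semi--adequacy, the absence of nugatory crossings, connectedness of the factors, and the $2$--edge loop hypothesis, and that the twist regions of $D(K)$ do not cross $\gamma$ and hence split cleanly between the two pieces. Once this inheritance is in hand, the remainder of the argument is a direct appeal to Theorem~\ref{thm:main} together with the classification of diagrams with at most one twist region.
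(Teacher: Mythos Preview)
Your proposal is correct. The forward direction (``$D(K)$ prime implies $K$ prime'') matches the paper's proof exactly: invoke Theorem~\ref{thm:main} when there are at least two twist regions, and handle the single--twist--region case as a $(2,q)$ torus link.

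For the converse, your route differs from the paper's. The paper simply cites \cite[Corollary~3.21]{fkp:gutsjp}, which asserts that for \emph{any} non-split prime link $K$, every semi-adequate diagram of $K$ without nugatory crossings is already prime---a statement that does not require the 2--edge loop condition at all. Your argument instead bootstraps from Theorem~\ref{thm:main}: split a non-prime $D(K)$ along $\gamma$, verify that each factor inherits connectedness, semi-adequacy, the absence of nugatory crossings, and the 2--edge loop condition (your observation that $\GA$ of a diagrammatic connect sum is a one--point union is the key fact here, and it is correct since $\gamma$ necessarily meets a single state circle in two points), then iterate until the factors are prime and apply the forward direction to see each factor is a nontrivial link. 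This is a genuinely different, self-contained argument. The trade-off is that the paper's cited result is strictly stronger---it shows the implication ``$K$ prime $\Rightarrow$ $D(K)$ prime'' holds for all semi-adequate diagrams without nugatory crossings, not just those satisfying the 2--edge loop condition---whereas your approach needs the extra hypothesis on both sides but avoids appealing to the monograph. The inheritance checks you flag as the main obstacle are indeed routine and go through as you describe.
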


Corollary \ref{cor:prime} is reminiscent of some prior results for more restricted link families. For instance, it generalizes a theorem of Menasco: if $D(K)$ is a connected
alternating diagram, then $K$ is prime if and only if  $D(K)$ is prime \cite{menasco:incompress}. Similarly, Ozawa proved that if $D(K)$ is a connected
positive diagram, then $K$ is prime if and only if  $D(K)$ is prime \cite{ozawa:positive}.

 In fact, the hypothesis
on $2$--edge loops in Corollary \ref{cor:prime} should be unnecessary.
It is conjectured that
 a connected, prime, semi-adequate
diagram must always represent a prime link (see 
\cite[Problem 10.6]{fkp:gutsjp} and \cite{ozawa}).  Corollary \ref{cor:prime} gives
a partial solution to this conjecture.

Finally, we note that a connected, semi-adequate diagram always represents a non-split link. See Thistlethwaite
\cite[Corollary 3.2]{thi:adequate} for the original proof, relying on properties of link polynomial invariants, and Ozawa \cite[Theorem 2.15]{ozawa} for an alternate, geometric proof.  
Thus for the rest of the paper,
we will assume our diagrams are connected, and hence the link is
non-split.

\subsection{Volume bounds and relations}

The machinery of \cite{fkp:gutsjp} allows for connections between
geometric invariants of a link complement, combinatorial properties of
its diagram, and stable coefficients of its colored Jones polynomials.
The class of links of Theorem \ref{thm:main} is particularly well suited for 
such applications.  For instance, 
\cite[Corollary 9.4]{fkp:gutsjp} relates the hyperbolic volume of these links to the
Euler characteristic of the corresponding reduced state graph. (See Definition \ref{def:adequate} for the terminology and notation.)

\begin{corollary}
Suppose that $K$ is a link with prime, semi-adequate diagram $D(K)$ as
in the statement of Theorem \ref{thm:main}.  Then
$$ \vol(S^3 \setminus K) \: \geq \: - v_8 \, \chi(\G'),
$$ where $v_8 = 3.6638...$ is the volume of a regular ideal octahedron
and $\G'$ is $\GRA$ or $\GRB$ according to whether $D(K)$ is
$A$--adequate or $B$--adequate.
\end{corollary}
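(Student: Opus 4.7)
The plan is to combine Theorem \ref{thm:main}, proved elsewhere in this paper, with the volume bound from \cite[Corollary 9.4]{fkp:gutsjp}. The hypotheses of the corollary match those of Theorem \ref{thm:main} verbatim, so applying that theorem immediately yields that $K$ is hyperbolic; in particular, $\vol(S^3\setminus K)$ is the hyperbolic volume of the complement, a well-defined positive real number.

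With hyperbolicity in hand, I would then invoke \cite[Corollary 9.4]{fkp:gutsjp}, which furnishes precisely the inequality $\vol(S^3\setminus K)\geq -v_8\,\chi(\G')$ for links arising from prime, semi-adequate diagrams. That earlier result is proved by showing the state surface $S_A$ (or $S_B$) is essential, decomposing its complement into ideal polyhedra whose combinatorics are encoded by $\G'$, identifying the guts of the decomposition with a submanifold whose negative Euler characteristic is bounded below by $-\chi(\G')$, and feeding the output into the Agol--Storm--Thurston theorem; the constant $v_8$, the volume of a regular ideal octahedron, appears as the maximal hyperbolic volume of the relevant polyhedral pieces.

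The only conceptual role played by Theorem \ref{thm:main} is an interpretive one: without hyperbolicity the inequality of \cite[Corollary 9.4]{fkp:gutsjp} bounds the simplicial volume (equivalently, the sum of hyperbolic volumes of the pieces in the JSJ decomposition) of $S^3\setminus K$, but once Theorem \ref{thm:main} rules out non-hyperbolic JSJ pieces, this upgrades to a lower bound on the genuine hyperbolic volume of $S^3\setminus K$. I do not anticipate any significant obstacle, since all the substantive work has already been done in Theorem \ref{thm:main} and in the machinery of \cite{fkp:gutsjp}; the verification is therefore essentially a bookkeeping step that checks both ingredients apply under the stated hypotheses.
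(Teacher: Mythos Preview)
Your proposal is correct and matches the paper's approach exactly: the corollary is stated in the introduction as an immediate consequence of combining Theorem~\ref{thm:main} (to ensure hyperbolicity) with \cite[Corollary~9.4]{fkp:gutsjp}, and the paper gives no further proof beyond this citation. Your additional remarks about the role of guts and the Agol--Storm--Thurston machinery, and about simplicial versus hyperbolic volume, are accurate elaborations of what is implicit in that citation.
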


This corollary has applications in two directions. 
First, coupled with the upper volume bounds given by Lackenby,  Agol, and Thurston,
\cite{lackenby:volume-alt}, and combined with additional work of Giambrone
\cite{giambrone},  it leads to two-sided bounds on $\vol(S^3
\setminus K)$ in terms of the twist number of a semi-adequate diagram.
This extends a result of Lackenby \cite{lackenby:volume-alt} and
results of the authors \cite{fkp:filling, fkp:conway, fkp:farey} to
new link families.  Second, it leads to two-sided bounds on $\vol(S^3 \setminus K)$ in
terms of stable coefficients of the colored Jones polynomials of $K$,
as predicted by the Coarse Volume Conjecture \cite[Question
  10.13]{fkp:gutsjp}.  Details are given in Giambrone \cite{giambrone}.

\subsection{Satellite semi-adequate links}

As we mentioned above, there is a conjectural strengthening of 
Corollary \ref{cor:prime}, which removes the hypothesis on 2--edge
loops.  Formulating the right
conjectural strengthening of Theorem \ref{thm:main} requires some care,
as there are many non-hyperbolic semi-adequate links.
For example, it is well-known that all torus links are
semi-adequate. Similarly, all planar cables of a semi-adequate diagram
are semi-adequate \cite{cromwell-book}. There are also many semi-adequate satellites, as
the following construction illustrates.

\begin{example}\label{ex:satellite}
Recall that a satellite link is constructed from a \emph{companion
  knot} $J \subset S^3$, a \emph{pattern link} $K' \subset D^2 \times
S^1$, and an embedding $f: D^2 \times S^1 \to N(J)$. The image $K =
f(K')$ will be a non-trivial satellite whenever $J$ is non-trivial, and $K'$ is
not the core of the solid torus or contained in a ball in the solid torus. The whole construction can
be performed diagrammatically: given a diagram $D(J) \subset \RR^2$,
and a diagram $D(K') \subset [0,1] \times S^1$, the blackboard framing
of $D(J)$ specifies a way to immerse the annulus $I \times S^1$ into
$\RR^2$, which gives a diagram $D(K)$. See Figure \ref{fig:semiad-satellite}.

Suppose $D(J) \subset \RR^2$ and $D(K') \subset I \times S^1$ are both
$A$--adequate diagrams. Consider the graph $H_A(K')$ coming from $K'$
(see Definitions \ref{def:HA} and \ref{def:adequate}). Suppose that
there is a rectangle $R = I \times I \subset I \times S^1$, such that
if we remove the state circles and segments of $H_A$ that lie entirely
in $R$, what remains is $n \geq 1$ copies of the core curve $\{\ast\}
\times S^1$.

Now, suppose that we use the blackboard framing of $D(J)$ to immerse
the annulus $I \times S^1$ into $\RR^2$, so that the image rectangle
$f(R)$ lies in a crossing--free region of $D(J)$. Outside the image
rectangle $f(R)$, the graph $H_A$ of the resulting diagram $D(K)$ will
look identical to the graph of the $n$--fold planar cable of $J$, which is
known to be $A$--adequate. Inside $f(R)$, the diagram is $A$--adequate
because $D(K')$ is $A$--adequate. See Figure \ref{fig:semiad-satellite} for
an example, which produces the Whitehead double of the trefoil.
\end{example}

\begin{figure}
\begin{overpic}{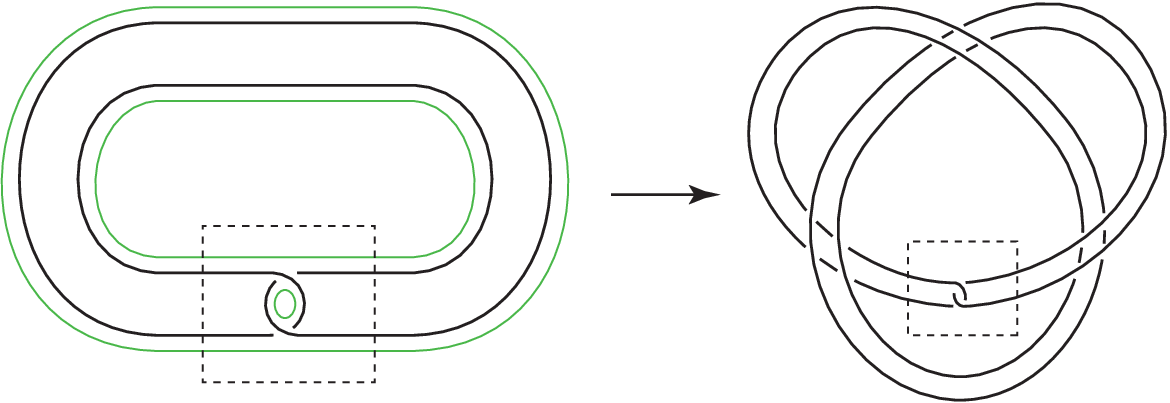}
\put(24,17){$R$}
\put(55,20){$f$}
\put(79,16){$f(R)$}
\end{overpic}
  \caption{Left: a diagram $D(K')$ in an annulus. State circles of the
    all--$A$ resolution are shown in green. After removing state
    circles in a rectangle $R$, what remains is two parallel copies of
    the core. Right: an embedding of this annulus into a regular
    neighborhood of the trefoil produces an $A$--adequate diagram of
    a Whitehead double.}
  \label{fig:semiad-satellite}
\end{figure}
 
\begin{conjecture}\label{conj:satellite}
Suppose $D(K)$ is a semi-adequate diagram of a satellite link $K$. Then $D(K)$ or its mirror 
image can be obtained using the construction of Example \ref{ex:satellite}. In particular, the satellite
torus must be visible in $D(K)$ as the regular neighborhood of an immersed annulus.
\end{conjecture}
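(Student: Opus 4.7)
The strategy is to reverse-engineer the normal surface arguments underlying Theorem \ref{thm:main}. Without loss of generality, assume $D(K)$ is $A$--adequate, and let $T \subset S^3 \setminus K$ be an essential torus bounding a non-trivial satellite companion. Let $S_A$ denote the all--$A$ state surface spanned by $K$; it is essential in $S^3 \setminus K$ by \cite{ozawa}. Using a standard innermost-disk / outermost-arc argument, isotope $T$ so that $T \cap S_A$ is a finite collection of curves that are essential on both sides. Then cut $S^3 \setminus K$ along $S_A$ to obtain the ideal polyhedral decomposition developed in \cite{fkp:gutsjp}, and put $T \cut S_A$ into normal form with respect to this decomposition.

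The next step is the combinatorial core of the argument. Normal surface theory in these polyhedra, as in \cite{fkp:gutsjp} and in the proof of Theorem \ref{thm:main}, translates each normal disk of $T \cut S_A$ into a combinatorial pattern on the state graph $\G$ and its reduced graph $\GRA$. Following the proof of Theorem \ref{thm:main}, essential tori can only arise when there are ``bad'' 2--edge loops in $\G$ whose edges lie in different twist regions of $D(K)$. I would carry out a finer analysis showing that, along $T$, the collection of bad 2--edge loops organizes itself into a cyclically ordered family. This family bounds an embedded annular subregion of the projection plane (after puncturing by state circles), which should be realized as the image of an immersed annulus $f \co I \times S^1 \to \RR^2$.

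The embedded annulus identified above is then interpreted as the blackboard framing of a companion diagram $D(J)$: cutting $\RR^2$ along $f(I \times S^1)$ and collapsing the complementary pieces of $f(R)$ to the core of the annulus yields a diagram $D(J) \subset \RR^2$, while re-reading the crossings inside $f(R)$ yields a pattern diagram $D(K') \subset I \times S^1$. One then verifies that both $D(J)$ and $D(K')$ inherit $A$--adequacy from $D(K)$, and that the state circles of the all--$A$ resolution of $D(K')$ satisfy the rectangle condition described in Example \ref{ex:satellite}. Re-inserting $D(K')$ into a neighborhood of $D(J)$ via $f$ then recovers $D(K)$ on the nose.

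The main obstacle I expect is the second paragraph: ensuring that the bad 2--edge loops produced by an arbitrary essential torus actually assemble into a single coherent annulus in the projection plane, rather than into a scattered collection. One must rule out configurations where $T$ meets the polyhedral decomposition in a combinatorially intricate way with multiple competing candidates for the satellite annulus, and show that essentiality of $T$ forces exactly one. A secondary subtlety is distinguishing genuine satellite tori from those associated with torus knots, cables, or tori parallel to the boundary of a pattern solid torus; here an auxiliary case analysis, very likely using the guts machinery of \cite{fkp:gutsjp} to control the JSJ pieces, will be needed to confirm that Example \ref{ex:satellite} captures precisely the satellite case.
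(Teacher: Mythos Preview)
The statement you are attempting to prove is labeled a \emph{Conjecture} in the paper, and the paper offers no proof of it; indeed, the authors say only that the results of Section~\ref{sec:incompr} ``will have further applications, including in approaching Conjecture~\ref{conj:satellite}.'' So there is no paper proof to compare against, and your proposal should be read as a research outline for an open problem rather than a proof.

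Viewed that way, the outline has a genuine gap at exactly the point you flag yourself. The contrapositive of Theorem~\ref{thm:tori} does give you that an essential torus forces some $2$--edge loop outside a single twist region, but the paper's machinery does not tell you \emph{where} the torus sits relative to those loops once the $2$--edge loop hypothesis fails. Concretely: the classification of diagrammatically incompressible annuli in Section~\ref{sec:incompr} (Lemmas~\ref{lemma:para-incompr-lower} and~\ref{lemma:para-incompr-upper}) is hypothesis-free and does yield an annular picture (cycles of fused units), which is encouraging. But the treatment of diagrammatically compressible annuli in Section~\ref{sec:compr} leans essentially on the $2$--edge loop condition --- Lemma~\ref{lemma:EPD-typeB} invokes it to cut the seven EPD types of \cite[Theorem~6.4]{fkp:gutsjp} down to two, and Lemmas~\ref{lemma:upper-idealvertex} and~\ref{lemma:lower-idealvertex} use it again. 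Drop that hypothesis and all seven EPD types are back in play, with no mechanism provided to organize the resulting squares into a single immersed annulus in the projection plane. Your assertion that ``the collection of bad $2$--edge loops organizes itself into a cyclically ordered family'' bounding an annular region is precisely the content of the conjecture; nothing in the proposal, and nothing in the paper, supplies an argument for it. Until that step is filled in, what you have is a plausible strategy, not a proof.
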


\subsection{Organization}
The paper is organized as follows. In Section \ref{sec:background}, we
summarize definitions and the main tools from \cite{fkp:gutsjp} that
are needed in this paper.  To a semi-adequate diagram $D(K)$, we
associate a state surface $S_A$ that is essential in the complement of
$K$.  Its complement $M_A = S^3 \cut S_A$ admits a checkerboard ideal
polyhedral decomposition (see \S\ref{subsec:polyhedra}).  The
intersection of an essential torus in $S^3\setminus K$ with $M_A$ is a
collection of essential annuli that can be studied using normal
surface theory with respect to the polyhedral decomposition.  We
discuss this in Section \ref{sec:annuli}, where we also show that the
resulting essential annuli in $M_A$ fall into two types:
\emph{diagrammatically compressible} and \emph{diagrammatically
  incompressible} (see Definition \ref{def:diag-comp}).

In Sections \ref{sec:incompr} and \ref{sec:compr}, we analyze the two
types of annuli and conclude that under the hypotheses of Theorem
\ref{thm:main}, neither of the two types arise as part of an essential
torus.  We note that our analysis of diagrammatically incompressible
annuli in Section \ref{sec:incompr} does not require the hypotheses of
Theorem \ref{thm:main}.  It leads to a classification of such annuli
in complements of all semi-adequate links. We expect that these results
will have further applications, including in approaching Conjecture
\ref{conj:satellite}.  In Section \ref{sec:compr}, where we study
diagrammatically compressible annuli, a key ingredient is a
classification of \emph{essential product disks} in polyhedra from
\cite{fkp:gutsjp}.  The main result in Sections \ref{sec:incompr} and
\ref{sec:compr} is Theorem \ref{thm:tori}:  if a diagram $D(K)$ is as in
the statement of Theorem \ref{thm:main}, then $S^3\setminus K$ is
atoroidal.

In Section \ref{sec:seifert},  we complete the proof of Theorem \ref{thm:main}
and Corollaries \ref{braids} and \ref{cor:prime}.
Our approach in this section is to rule out the possibility
that $S^3 \setminus K$ might be Seifert fibered, using Gromov norm
estimates and Turaev surface methods.

%%%%%%%%%%%%%%%%%%%%%%%%%%%%%%%%%%%%%%%%%%%%%%%%%%%%%%%%%%%%%%%%%
%%%%%%%%%%%%%%%%%%%%%%%%%%%%%%%%%%%%%%%%%%%%%%%%%%%%%%%%%%%%%%%%%
\section{Background and tools}\label{sec:background}

We begin this section by recalling definitions of semi-adequate knots
and related terms.  We also review some constructions from
\cite{fkp:gutsjp} that we will be using throughout the paper.

\subsection{Definitions}

For any crossing of a link diagram $D:=D(K)$, there are two resolutions,
called the \emph{$A$--resolution} and \emph{$B$--resolution} of the
crossing, shown in Figure \ref{fig:splicing}.

\begin{figure}[h]
	\centerline{\input{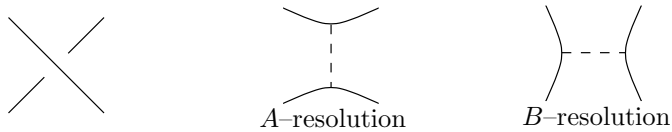}}
\caption{$A$-- and $B$--resolutions of a crossing.}
\label{fig:splicing}
\end{figure}

A choice of $A$-- or $B$--resolution for each crossing of $D$ is
called a \emph{Kauffman state} \cite{KaufJones}.  The result of applying a state to $D$
a collection of
circles disjointly embedded in the projection plane. These are called
\emph{state circles}. 

Throughout this paper, we will be concerned only with the all--$A$ and
all--$B$ states, which correspond to making a uniform choice of $A$ or $B$ at all crossings.

\begin{define}\label{def:HA}
Given a diagram $D(K)$ and the all--$A$ state of $D$, we construct a trivalent graph
$H_A$ as follows.  For each resolved crossing of $D$,  add an edge 
 between resulting state circles,
shown dashed in Figure \ref{fig:splicing}.  Every edge of $H_A$ either belongs to a
state circle of the all--$A$ resolution, or comes from a
crossing.  The latter edges are called \emph{segments}.

Similarly, we define a trivalent graph $H_B$, whose edges consist of
state circles and segments of the $B$--resolution. Observe that the original 
link diagram $D(K)$ can be reconstructed from the graph $H_A$ or $H_B$.
\end{define}

\begin{define}\label{def:adequate}
The \emph{state graphs} $\GA$ and $\GB$ are obtained from $H_A$ and
$H_B$, respectively, by collapsing each state circle to a vertex.
Removing redundant edges between vertices, we obtain the \emph{reduced
  state graphs} $\GRA$ and $\GRB$.

Following Lickorish and Thistlethwaite \cite{lick-thistle,
  thi:adequate}, a diagram $D$ is said to be \emph{$A$--adequate} if
every edge of $\GA$ has its endpoints on distinct vertices.
Similarly, one can define $B$--adequate diagrams using $\GB$. A link
diagram that is either $A$--adequate or $B$--adequate is called
\emph{semi-adequate}.
  
A link $K$ will be called $A$--adequate ($B$--adequate) if it admits
an $A$--adequate ($B$--adequate) diagram.  A link that is either
$A$--adequate or $B$--adequate is called \emph{semi-adequate}.
\end{define}

\begin{define}\label{def:twist}

A link diagram $D$ is called \emph{prime} if any closed curve in the
projection plane that meets the diagram transversely exactly twice
bounds a region of the diagram with no crossings.

A \emph{twist region} of $D$ is a collection of bigons in $D$ that are
adjacent end to end, such that there are no additional adjacent bigons on either end.  An example of such a twist region is shown at the top of Figure~\ref{fig:twist-resolutions}. A single crossing adjacent to no bigons is also a twist region.  
We require twist regions to be
alternating, for if $D$ contains a bigon that is not alternating, then
a Reidemeister move removes both crossings without altering the rest
of the diagram.
The number of distinct twist regions in a diagram is defined to be the
\emph{twist number} of that diagram. Note that if $D$ has exactly one twist region, it is a closed
2--braid; i.e.\ the standard diagram of a $(2, q)$ torus link.
\end{define}

To understand the statement of Theorem
\ref{thm:main}, we need to explain the hypothesis that for each
2--edge loop in the state graph, the edges belong to the same twist
region of $D(K)$.  To make this precise, we need the following
definition.

\begin{define}\label{def:long-short}
Suppose $R$ is a twist region of a link diagram $D$ such that $R$ contains $c_R>1$ crossings.  Consider the all--$A$ and all--$B$ resolutions applied to $R$.  One of the state graphs, say $\GB$, will inherit $c_R -1$ vertices from the $c_R -1$ bigons contained in $R$. We say that this is the \emph{long resolution} of $R$.  The other graph, say $\GA$, contains $c_R$ parallel edges only one of which survives in $\GRA$.  This is the \emph{short resolution} of $R$. See Figure~\ref{fig:twist-resolutions}.
 \end{define}

\begin{figure}[t]
\begin{center}
  \input{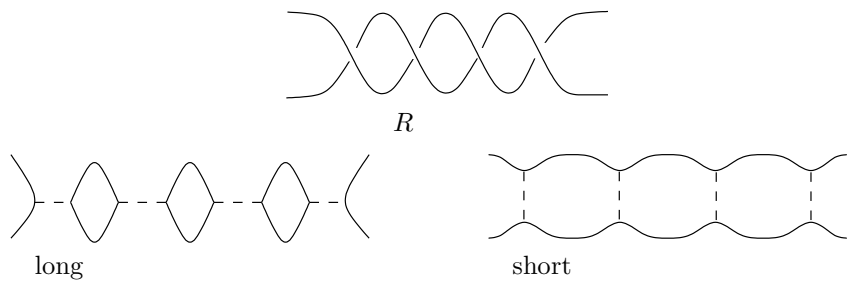}
\end{center}
\caption{Resolutions of a twist region $R$.}
\label{fig:twist-resolutions}
\end{figure}

Throughout the paper, we will be concerned with semi-adequate diagrams
where the 2--edge loops in the corresponding state graph come from
short resolutions of twist regions.

\begin{define}\label{def:2edges-belong}
Suppose that the state graph, say $\GA$, contains a 2--edge loop. We say that
the two edges of that loop \emph{belong to the same twist region} $R$
of the diagram if first, the edges come from resolving two crossings
in $R$, and second, the resolution of $R$ in $\GA$ is the short one.
If every $2$--edge loop in $\GA$ has its edges in the same twist region, 
we say that $\GA$ satisfies the \emph{2--edge loop
  condition}.  
\end{define}

Suppose $\gamma$ is a simple closed curve meeting the diagram $D(K)$ exactly
twice in two crossings $x_1, x_2$.  Adjust $\gamma$ in a neighborhood of each
crossing, so that after the adjustment $\gamma$ meets the diagram
exactly four times, and has a subarc $\gamma_i$ in the neighborhood of $x_i$, 
with endpoints on the projection of $K$.
Now consider the $A$-- and $B$--resolutions of the two crossings.  For
each $x_i$, exactly one of these resolutions will produce a segment
that is parallel to $\gamma_i$.  When $\gamma$ meets two crossings in
a twist region, then the resolution producing the segment parallel to
$\gamma_i$ is the short resolution of the twist region.

\begin{figure}[h]
  \input{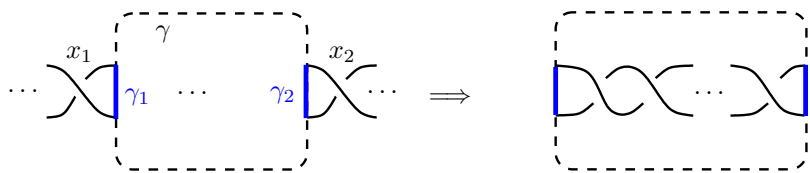}
  \caption{The property of being $A$--twist reduced. Whenever there is
    a closed curve $\gamma$ meeting the diagram as shown on the left,
    the crossings $x_1$ and $x_2$ must belong to the same twist
    region, as shown on the right. Note that the twist region
    containing $x_1, x_2$ can lie on either side of $\gamma$.}
  \label{fig:twist-reduced}
\end{figure}

\begin{define}\label{def:A-twist-reduced}
We say that a diagram is \emph{$A$--twist reduced} if it satisfies the
following property.  Suppose $\gamma$ is a simple closed curve
$\gamma$ meeting the diagram exactly four times adjacent to two
crossings, as above, such that the all--$A$ state produces segments
parallel to subarcs of $\gamma$.  Then $\gamma$ bounds a subdiagram
consisting of a (possibly empty) collection of bigons arranged in a
row between the two crossings. See Figure \ref{fig:twist-reduced}.
\end{define}

The property of being \emph{$B$--twist reduced} is defined in the same
way, with the $B$--resolution replacing the $A$--resolution. Adequate
diagrams that are both $A$-- and $B$--twist reduced are \emph{twist
  reduced} in the sense of Lackenby \cite{lackenby:volume-alt}.

In the sequel, we will consider $A$--adequate diagrams for
simplicity. The same results hold for $B$--adequate diagrams, by
taking a mirror image.

\begin{lemma}\label{lemma:twist-reduced}
Let $D(K)$ be an $A$--adequate diagram such that $\GA$ satisfies the
2--edge loop condition. Then $D(K)$ is $A$--twist reduced.
\end{lemma}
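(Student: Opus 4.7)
The plan is to take an arbitrary simple closed curve $\gamma$ satisfying the hypothesis for $A$-twist reducedness and show that it must bound a row of bigons of $D(K)$. Label the four intersections of $\gamma$ with $K$ as $p_1, p_2$ (the endpoints of $\gamma_1$ near $x_1$) and $p_3, p_4$ (the endpoints of $\gamma_2$ near $x_2$). After the all-$A$ resolution, $K$ is replaced by the union of state circles and segments $s_i$, and each $p_k$ lies on some state circle, which we denote $C_k$.

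The first step is to produce a 2-edge loop in $\GA$ from $\gamma$. The key observation is that $\gamma \cap K = \{p_1, p_2, p_3, p_4\}$, so each state circle $C$ meets $\gamma$ exactly in the subset of these four points that lie on $C$. Since $\gamma$ and $C$ are both simple closed curves in $S^2$ meeting transversely, they must intersect in an even number of points. The $A$-adequacy of $D(K)$ gives $C_1 \neq C_2$ and $C_3 \neq C_4$, since otherwise $s_1$ or $s_2$ would be a loop at a single vertex of $\GA$. Combined with parity, the state circle $C_1$ containing $p_1$ must also contain one of $p_3, p_4$, and it cannot contain $p_2$. After relabeling, assume $C_1 = C_3$; the same argument applied to $C_2$ yields $C_2 = C_4$. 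Therefore $s_1$ and $s_2$ are distinct parallel edges between the vertices corresponding to $C_1$ and $C_2$ in $\GA$, forming a 2-edge loop.

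Applying the 2-edge loop condition then forces $x_1$ and $x_2$ to lie in a common twist region $R$ of $D(K)$, with the $A$-resolution of $R$ being the short one. In this configuration the combinatorial structure of $R$ is rigid: all $c_R$ segments from $R$ are parallel edges between $C_1$ and $C_2$, and the crossings of $R$ sit in a row joined by bigons. A local analysis at $R$ will then complete the proof. Each $\gamma_i$ is a chord adjacent to $x_i$ lying parallel to the horizontal segment $s_i$, and the complementary arcs of $\gamma$ between the $p_k$'s lie in faces of $D(K)$, since $\gamma$ avoids $K$ outside of $\{p_1, \ldots, p_4\}$. The only faces whose boundaries contain both a point of $\gamma_1$ and a point of $\gamma_2$ are the bigons of $R$ and the two outside faces flanking $R$, and tracing the possible arrangements shows that $\gamma$ must bound the sub-strip of $R$ from $x_1$ to $x_2$. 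This sub-strip is exactly a row of bigons, as required.

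The main obstacle is the local case analysis in the final step: one must enumerate the placements of $\gamma_i$ relative to $x_i$ and verify in each case that $\gamma$ bounds the desired sub-strip rather than enclosing some unexpected portion of $R$. This is a straightforward combinatorial check given the rigid structure of a short-resolution twist region, but attention is needed to set up orientations carefully and to track the positions of the $p_k$ on the state circles $C_1$ and $C_2$ as one moves along $R$.
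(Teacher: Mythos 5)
Your proposal is correct and follows essentially the same route as the paper: use $A$--adequacy to show the two segments at $x_1,x_2$ join the same pair of state circles (your parity count is just a repackaging of the paper's observation that no state circle can run from $x_i$ back to $x_i$), hence form a 2--edge loop, then invoke the 2--edge loop condition to place $x_1,x_2$ in a common twist region with the short resolution. The concluding ``local case analysis'' you defer is exactly the step the paper dispatches with ``by definition,'' so nothing essential is missing.
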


\begin{proof}
Suppose that $\gamma$ is a closed curve as in Definition
\ref{def:A-twist-reduced}, meeting the diagram exactly four times
adjacent to two crossings, $x_1$ and $x_2$.  Let $\gamma_i$ denote the
subarc of $\gamma$ which runs between two points on the diagram and
lies in a neighborhood of the crossing $x_i$.
 Suppose that the $A$--resolution of $x_1$ and $x_2$
produces segments parallel to $\gamma_1$ and $\gamma_2$.  Consider how
the state circles of the all--$A$ state intersect the region inside
$\gamma$.

First, note that if some state circle runs from $x_i$ back to $x_i$, this
state circle will violate the definition of $A$--adequacy.   Therefore, there must be
two state circles running from $x_1$ to $x_2$, and the two edges of $\GA$
corresponding to those crossings give a $2$--edge loop.  By
hypothesis, the edges belong to the same twist region $R$, which means
the edges come from resolving two crossings of the twist region $R$ in
$\GA$.  Then by definition there must be a (possibly
empty) collection of bigons between $x_1$ and $x_2$, as desired.
\end{proof}

%%%%%%%%%%%%%%%%%%%%%%%%%%%%%%%%%%%%%%%%%%%%%%%%%%%%%%%%%%%%%%%%%
\subsection{Polyhedral toolbox}\label{subsec:polyhedra}
The main technical tool that we use is a decomposition of the link complement into an $I$--bundle over a surface and a collection of ideal polyhedra \cite[Chapters 2--4]{fkp:gutsjp}.  In order to make this paper as self-contained as possible, we will review the definitions and constructions from there that are relevant to this paper.  We will also recall the statements of some key results that are needed below.  In the few occasions when we rely on results from \cite{fkp:gutsjp} that are not restated in this section, we will refer the reader to the exact statement of the result we are using in that monograph.  The background we provide here should suffice for checking and absorbing the statements in these cases. The reader need only consult the monograph \cite{fkp:gutsjp} in order to learn the detailed proofs.

For a reader who is new to this material, we also recommend consulting the survey paper \cite{fkp:survey} for a quick guide to the key features of the polyhedral decomposition.

\begin{define}\label{def:statesurface}
A diagram $D(K)$ determines a \emph{state surface} $S_A$, constructed as follows.  Each state circle of $H_A$ bounds a disk, and the disks associated to all the state circles can be disjointly embedded in the $3$--ball below the projection plane.  (Note this collection of disks in the lower $3$--ball is unique up to isotopy.)
Every crossing of $D(K)$ gives a segment of $H_A$, which runs between two state circles.  We connect the corresponding disks by a half--twisted band, twisted in the direction of the original crossing.  The result is a (possibly non-orientable) surface $S_A$, whose boundary is $K$.
\end{define}

When $D(K)$ is an $A$--adequate diagram, Ozawa \cite{ozawa} showed that $S_A$ is an essential surface in $S^3 \setminus K$. A different proof is given in \cite{fkp:gutsjp}.

\begin{define}
Let $M_A := S^3 \setminus N(S_A)$ denote the complement of an open regular 
neighborhood of $S_A$. When convenient, we will also use the shorthand notation $S^3 \cut S_A$  instead of $S^3 \setminus N(S_A)$.  The boundary of $M_A$ decomposes into the \emph{parabolic locus} (the remnants in $M_A$ of the boundary tori of $S^3 \setminus K$), and a surface $\widetilde{S_A}$ that can be identified as the frontier of $N(S_A)$ in $S^3 \setminus K$. Note that $\widetilde{S_A}$ is a double cover of $S_A$, connected if and only if $S_A$ is non-orientable.
\end{define}

The main technical tool of \cite{fkp:gutsjp} that we use is a
decomposition of $M_A$ into ideal polyhedra.  The faces of these
polyhedra are checkerboard colored, white and shaded.  The white faces
of each polyhedron are glued to another polyhedron, while the shaded
faces lie on $\widetilde{S_A}$.  There is exactly one \emph{upper
  polyhedron}, which occupies the $3$--ball above the projection
plane.  There are multiple \emph{lower polyhedra}, each of which is
glued along its white faces to the upper polyhedron only.  All the
polyhedra are \emph{prime}, in the sense that a pair of faces share at
most one edge.

The precise combinatorics of the ideal polyhedra can be read off from
the diagram $D(K)$ and the graph $H_A$. (See \cite[Chapter 2]{fkp:gutsjp} 
and \cite[Section 5]{fkp:survey} for 
details on how to do this.)  Here, we describe the features that
are salient for this paper.  

The first feature we will need is information about the combinatorics of the lower polyhedra.  This information comes from subgraphs of $H_A$, or slight modifications of subgraphs, which we call \emph{polyhedral regions}. Their precise definition is as follows.

\begin{define}\label{def:polyhedral-region}
Suppose $\alpha$ is an arc in the complement of $H_A$ with both endpoints on a state circle $C$.  Consider the subgraph of $H_A$ consisting of $C$ and all state circles and segments which lie on the same side of $C$ as $\alpha$.  Note that $\alpha$ cuts the subgraph into two components, one on either side of $\alpha$. If both components contain segments, then we say $\alpha$ is a
\emph{non-prime arc}.  A collection of non-prime arcs is \emph{maximal} if, once we cut along all such arcs and all state circles, the graph decomposes into subgraphs that each contain a segment, and no larger collection of non-prime arcs has this property.  Figure~\ref{fig:polyregex}, left, shows an example of a graph $H_A$ with a maximal collection of non-prime arcs. 

Let $\{\alpha_1, \dots, \alpha_n\}$ denote a maximal collection of non-prime arcs.  A  \emph{polyhedral region} is a nontrivial region of the complement of the state circles and the $\alpha_i$, where  by \emph{nontrivial} we mean the region contains segments. Each lower polyhedron corresponds to precisely one of these polyhedral regions. Note that if $H_A$ admits no non-prime arcs, then a polyhedral region is just a region of the complement of the state circles which contains segments.  Figure~\ref{fig:polyregex} shows an example.
\end{define}

\begin{figure}
  \includegraphics[scale=.9]{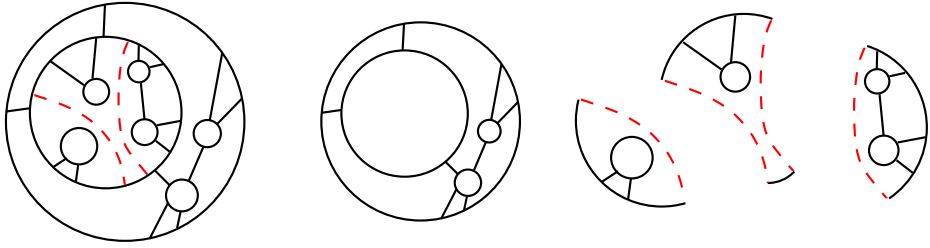}
  \caption{Left: A graph $H_A$ with a maximal collection of non-prime arcs (shown in dashed red).  Right: This example breaks into four distinct polyhedral regions, as shown.}
  \label{fig:polyregex}
\end{figure}

Each white face of the polyhedra corresponds to a
nontrivial (i.e.\ non-innermost disk) complementary region of $H_A
\cup (\cup_{i=1}^n \alpha_i)$.  The white faces that belong to a
lower polyhedron are glued to corresponding white faces in the unique upper
polyhedron.

Associated to each polyhedral region $R$, and a corresponding lower
polyhedron $P$, we have a \emph{clockwise map}. Loosely speaking, the
clockwise map $\phi$ gives us a way to associate the lower polyhedron
$P$ with a section of the upper polyhedron.

\begin{define}\label{def:clockwise}
The \emph{clockwise map} $\clock$ is a homeomorphism from the white faces of
the upper polyhedron belonging to the polyhedral region $R$ to the white faces of the corresponding lower polyhedron $P$.  On each white face, $\clock$ is defined by composing the gluing map to a white face of $P$ with a single clockwise rotation in that face.

Figure~\ref{fig:clockwise} shows an example of how the clockwise map compares to the gluing map.  
\end{define}

\begin{figure}
  \input{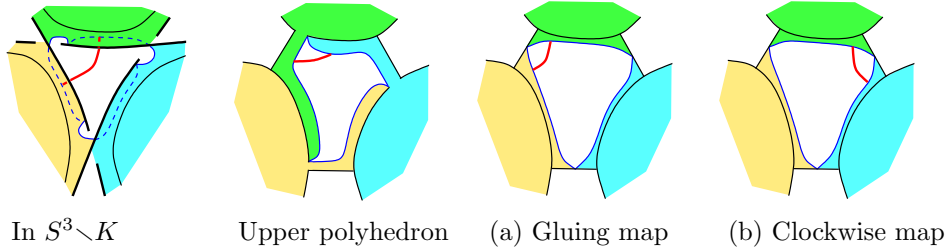}
  \caption{An edge (in red) in the link complement is shown, along with its position in the upper polyhedron, and images under the gluing and clockwise maps. 
  }
  \label{fig:clockwise}
\end{figure}

There is a way to extend the domain of definition of the clockwise map to \emph{normal squares}, that is, normal disks with $4$ sides, and we will use this multiple times in this paper. The following is a restatement of \cite[Lemma 4.8]{fkp:gutsjp}.

\begin{lemma}\label{lemma:clockwise}
Let $S$ be a normal square in the upper polyhedron, with arcs
$\beta_V$ and $\beta_W$ in white faces $V,W$ that belong to a
polyhedral region $R$. Let $P$ be the lower polyhedron corresponding
to $R$, with clockwise map $\phi$.  Then there is a normal square
$\phi(S) \subset P$, unique up to normal isotopy, which contains white
sides $\phi(\beta_V)$ and $\phi(\beta_W)$.

Furthermore, if $S$ is glued along $V$ to a square $T$ in the lower polyhedron, then $\phi(S) \cap V$ will differ from $T \cap V$ by a single clockwise rotation of $V$.
\qed
\end{lemma}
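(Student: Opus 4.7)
My plan is to prove Lemma \ref{lemma:clockwise} in three stages: construct the candidate square $\phi(S)\subset P$, verify its uniqueness up to normal isotopy, and then check the rotation comparison. Throughout, I will rely on the explicit description of the lower polyhedra in \cite[Chapter 2]{fkp:gutsjp}: every ideal edge of $P$ arises from an arc of $H_A$ or from a non-prime arc bounding the polyhedral region $R$, and each shaded face of $P$ is a disk whose boundary cycles through these ideal edges in a pattern determined by $R$ together with its gluing to the upper polyhedron.

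To build $\phi(S)$, I would first apply the clockwise map to the two white sides separately, obtaining arcs $\phi(\beta_V)\subset \phi(V)$ and $\phi(\beta_W)\subset \phi(W)$ in $P$. The endpoints of $\phi(\beta_V)$ lie on two ideal edges of $\phi(V)$, namely the images of the ideal edges containing the endpoints of $\beta_V$ after gluing and one clockwise rotation; similarly for $\phi(\beta_W)$. The crux is to check that these four endpoints pair up through exactly two shaded faces of $P$. Because the clockwise rotation is applied uniformly to every white face of $R$, the resulting pairing of ideal edges in $P$ is precisely the rotated image of the pairing realized by the shaded sides of $S$ in the upper polyhedron. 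Connecting the endpoints in these two shaded faces by normal arcs produces a normal disk in $P$ with four sides; primality of $P$ then forces each pair of these sides to lie in distinct faces, so the disk is a normal square.

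Uniqueness is then immediate: in a prime ideal polyhedron, a normal disk is determined up to normal isotopy by the cyclic sequence of faces it traverses, and this sequence is pinned down by the prescribed white sides and the identification of the two shaded faces carried out above. The final assertion comparing $\phi(S)\cap V$ with $T\cap V$ is essentially definitional: on the white face $V$, the map $\phi$ is, by definition, the gluing map to the corresponding face of $P$ followed by one clockwise rotation of that face; since $T\cap V$ is exactly the image of $\beta_V$ under the gluing map, the two arcs differ by precisely one clockwise rotation.

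The main obstacle I anticipate is the combinatorial step in the middle of the existence argument: verifying that rotating the endpoints of $\phi(\beta_V)$ and $\phi(\beta_W)$ by one click clockwise sends them to ideal edges of $P$ that actually co-bound the same shaded faces, so the shaded sides can close up as claimed. This reduces to careful bookkeeping with the gluing data from \cite[Chapter 2]{fkp:gutsjp}, checking that the uniform clockwise rotation preserves the adjacency pattern of shaded faces around each white face of $R$.
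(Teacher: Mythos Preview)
The paper does not actually prove this lemma; the statement is introduced as ``a restatement of \cite[Lemma 4.8]{fkp:gutsjp}'' and ends with a bare \qed. So there is no proof here to compare against --- only the one in the monograph.

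Your outline matches the strategy of the monograph's proof, and you have correctly located the one substantive step: showing that the endpoints of $\phi(\beta_V)$ and $\phi(\beta_W)$ land on common shaded faces of $P$, so that the square closes up. But your justification for this step --- ``the clockwise rotation is applied uniformly, so the resulting pairing of ideal edges is the rotated image of the pairing realized by the shaded sides of $S$'' --- is not yet an argument. The shaded faces of the upper and lower polyhedra have genuinely different combinatorics (tentacles and non-prime half-disks upstairs, versus checkerboard regions downstairs), so the fact that two edges co-bound a shaded face in the upper polyhedron does not by itself say anything about $P$. What makes the step go through in \cite{fkp:gutsjp} is a structural result proved earlier in that monograph (\cite[Theorem 4.4]{fkp:gutsjp}): the lower polyhedron $P$ is combinatorially identical to the checkerboard polyhedron of an alternating link built from the polyhedral region $R$, and the clockwise map is exactly the identification of white faces under which this isomorphism holds. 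With that in hand, adjacency across a shaded face of $P$ is read off directly from the boundary combinatorics of $R$, which is also what the shaded sides of $S$ record after the rotation. Without invoking or reproving that structure theorem, the ``careful bookkeeping'' you defer to cannot actually be completed.

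Your handling of uniqueness and of the final rotation comparison is fine; both are formal once existence is established.
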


%%%%%%%%%%%%%%%%%%%%%%%%%%%%%%%%%%%%%%%%%%%%%%%%%%%%%%%%%%%%%%%%%
%%%%%%%%%%%%%%%%%%%%%%%%%%%%%%%%%%%%%%%%%%%%%%%%%%%%%%%%%%%%%%%%%
\section{Tori, annuli, and squares}\label{sec:annuli}
In this section, we will consider essential tori embedded in the
complement of a semi-adequate knot.  We will see that the state
surface $S_A$ cuts these into annuli, and we will consider properties
of these annuli.  In particular, we will show that the annuli
decompose into squares, all of which are either diagrammatically
compressible or diagrammatically incompressible.  This sets the stage
for the next two sections, in which these two cases are analyzed
separately.

\begin{lemma}\label{lemma:tori-cutto-annuli}
Let $D(K)$ be an $A$--adequate diagram, with all--$A$ state
surface $S_A$.  Let $T$ be an essential torus.  Then we may
isotope $T$ such that $T\setminus \widetilde{S_A}$ is an even number of
essential annuli, half embedded in $M_A$ and half embedded in the
$I$--bundle $N(S_A)$.

Furthermore, those essential annuli in $M_A$ are cut into normal
squares by the white faces of the polyhedral decomposition of $M_A$,
where each square has two opposite sides on shaded faces and two
opposite sides on white faces.
\end{lemma}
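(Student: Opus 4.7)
The plan is to proceed in three stages: reduce $T \cap \widetilde{S_A}$ to essential curves, decompose $T$ into alternating annuli, and then normalize and classify the resulting normal disks in $M_A$.

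Stage 1. Isotope $T$ to be transverse to $\widetilde{S_A}$, so that $T \cap \widetilde{S_A}$ is a finite disjoint union of simple closed curves. Since $S_A$ is essential in $S^3 \setminus K$, so is $\widetilde{S_A}$ in $M_A$, and standard innermost-disk and outermost-arc arguments — combined with irreducibility of $S^3 \setminus K$ and incompressibility of $T$ — remove any intersection curve that is inessential on either surface. The remaining curves are essential on both $T$ and $\widetilde{S_A}$.

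Stage 2. Disjoint essential simple closed curves on the torus $T$ are mutually parallel, so cutting $T$ along them produces $n$ annuli arranged cyclically around $T$. Because $\widetilde{S_A}$ is two-sided and separates $M_A$ from $N(S_A)$ locally, consecutive annuli lie on opposite sides; hence they alternate between $M_A$ and the $I$-bundle $N(S_A)$, $n$ is even, and exactly half the annuli lie in each region. Each annulus is essential: any compressing or boundary-compressing disk would, together with a subdisk of $T$, yield a disk or sphere in $S^3 \setminus K$ that either bounds a ball (by irreducibility) or compresses $T$, so an isotopy reduces $|T \cap \widetilde{S_A}|$; iterating leaves a family in which every piece is essential.

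Stage 3. Fix an essential annulus $A \subset M_A$. Apply standard normal surface theory relative to the checkerboard polyhedral decomposition of $M_A$ to put $A$ in normal form; since the polyhedra are prime, normalization proceeds by innermost-disk moves that preserve the isotopy class and essentiality of $A$. Now $\partial A$ lies on $\widetilde{S_A}$, that is, on shaded faces of the polyhedra, while the interior of $A$ is disjoint from $\widetilde{S_A}$ and so meets only white faces. Each arc of $A \cap \{\text{white face}\}$ therefore has both endpoints on $\partial A$. If such an arc had both endpoints on the \emph{same} boundary component of $A$, an outermost such arc would cut off a boundary-compressing disk for $A$, contradicting essentiality. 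Hence every arc of intersection with white faces runs from one boundary circle of $A$ to the other; these vertical arcs cut $A$ into rectangles, each with two opposite sides on shaded faces (subarcs of $\partial A$) and two opposite sides on white faces — exactly the normal squares claimed.

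The main obstacle is the last point in Stage 3: ensuring that boundary-essentiality of $A$ genuinely rules out arcs with both endpoints on the same component of $\partial A$. This is handled by pushing any putative boundary-compressing disk across $\widetilde{S_A}$: the result is either a compression or boundary-compression of the torus $T$, contradicting essentiality of $T$ in $S^3 \setminus K$. Once this is in place, the square structure follows from the elementary topology of an annulus cut by vertical arcs.
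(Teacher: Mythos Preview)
Your Stages 1 and 2 follow the paper's approach and are fine. The gap is in the final step of Stage 3. An arc $\alpha$ of $A \cap W$ with both endpoints on the \emph{same} boundary component of $A$ is, by definition, \emph{inessential} in $A$: it cobounds a disk in $A$ with a subarc of $\partial A$. A boundary-compressing disk for $A$, however, must meet $A$ in an \emph{essential} arc, one running from one boundary circle to the other. So the outermost such $\alpha$ does not produce a boundary compression, and $\partial$-incompressibility of $A$ is not the relevant obstruction. Your attempt to push such a disk across $\widetilde{S_A}$ to compress $T$ fails for the same reason: there is no essential arc of $A$ to work with.

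What the outermost arc $\alpha$ actually cuts off is a disk $D \subset A$ lying entirely in one polyhedron, with exactly two sides: $\alpha$ in the white face $W$ and a subarc of $\partial A$ in a single shaded face. That is, $D$ is a normal \emph{bigon}. The paper rules this out by appealing to the fact that the ideal polyhedra of $M_A$ are \emph{prime} (any two faces share at most one edge), so they contain no normal bigons. You already invoke primeness earlier in Stage~3 to justify normalization, but it is needed again here, and it --- not essentiality of $A$ --- is what forces every arc of intersection with a white face to run from one boundary circle of $A$ to the other.
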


\begin{proof}
Isotope $T$ to be transverse to $S_A$ and minimize the number of
curves of intersection with $S_A$.  Because $S_A$ and $T$ are both
essential, this ensures that all intersections $T \cap S_A$ are
nontrivial simple closed curves on $T$.  Hence $T \cap N(S_A)$ and $T
\setminus N(S_A)$ consist of annuli, which are essential because $T$
is essential.

Because the annuli in the closure of $T\setminus N(S_A)$ in $M_A$ are
essential, we can put them into normal form with respect to the
polyhedral decomposition of $M_A$.  This may involve isotopy of the
boundary components of the annuli.  We may isotope the adjacent annuli in
$N(S_A)$ to ensure that when we isotope annuli
into normal form, we actually isotope the entire torus.

Let $E \subset T \cap M_A$ be an essential annulus in normal
form. Since $E$ cannot be contained in a single polyhedron (because it
is essential), it must intersect the white faces. We claim that no arc
of intersection between $E$ and a white face can be parallel to the
boundary of $E$. This is because an outermost such arc would cut off a
normal bigon in an ideal polyhedron, and our \emph{prime} ideal
polyhedra do not contain normal bigons \cite[Proposition
  3.18]{fkp:gutsjp}. Therefore, every arc of intersection between $E$
and a white face runs across $E$, from one boundary circle to the
other. These arcs cut $E$ into normal squares, finishing the lemma.
\end{proof}

We will investigate the annuli and squares of Lemma
\ref{lemma:tori-cutto-annuli}.  The study of these surfaces will
naturally break into two cases: whether the squares cut off a single
ideal vertex in a white face, or whether each edge in a white face
cuts off multiple ideal vertices on both sides.  This is encoded in
the following definition.

\begin{define}\label{def:diag-comp}
Let $S \subset M_A$ be a surface in normal form. We say that $S$ is  \emph{diagrammatically compressible} if, in some white face $W$ of the  polyhedral decomposition, an arc of $S \cap W$ runs between two  adjacent edges of $W$. In other words, $S$ is diagrammatically  compressible in $W$ if $S \cap W$ cuts off an ideal vertex of $W$.   See Figure~\ref{fig:diagramincompr}.   Otherwise, if no such white face exists, we call $S$  \emph{diagrammatically incompressible}.
\end{define}

\begin{figure}
  \includegraphics{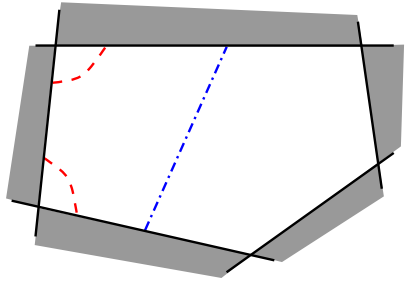}
  \caption{Shown is a white face $W$ of the polyhedral decomposition.  Red arcs (dashed) cut off a single ideal vertex of $W$.  A surface meeting $W$ in such an arc is diagrammatically compressible.  A surface meeting $W$ in the blue arc (dot-dashed) is diagrammatically incompressible, provided its intersections with other white faces also do not cut off a single ideal vertex.}
  \label{fig:diagramincompr}
\end{figure}

If $S$ is diagrammatically compressible in some white face $W$, it
cuts off a disk $U \subset W$ with two sides on shaded faces, one side
on $S$, and one side on an ideal vertex.  Such a disk $U$ is an
example of what Lackenby calls a \emph{parabolic compression disk}.
In other words, diagrammatically compressible surfaces are also
\emph{parabolically compressible} (see \cite[Page
  209]{lackenby:volume-alt} for a definition).  For annuli, the
converse also holds: by \cite[Proposition 4.21]{fkp:gutsjp}, a
parabolically compressible annulus $A \subset M_A$ must be
diagrammatically compressible.  Because we will be working with annuli
below, we will not need the notion of parabolic compressibility in
this paper.

\begin{lemma}\label{lemma:different-regions}
Let $Q$ be a square in the upper polyhedron, glued to normal squares
in lower polyhedra at each of its white sides. If the white sides of
$Q$ come from different polyhedral regions, then they each cut off a
single ideal vertex in that white face.  Hence $Q$ diagrammatically
compresses in both of its white faces in the upper polyhedron.
\end{lemma}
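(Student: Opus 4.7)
The plan is to use the combinatorial structure of the checkerboard polyhedral decomposition, focusing on which shaded faces of the upper polyhedron can simultaneously be adjacent to white faces from two different polyhedral regions. Write $V, W$ for the white faces of the upper polyhedron containing the two white sides $\beta_V, \beta_W$ of $Q$, and assume their polyhedral regions $R_V$ and $R_W$ are distinct.

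First, I would recall from \cite[Chapter 2]{fkp:gutsjp} the description of the shaded faces in the upper polyhedron. These come in two types: shaded faces associated to segments of $H_A$ at crossings, and shaded faces associated to the non-prime arcs $\alpha_i$ used to split regions of the diagram into distinct polyhedral regions. A crossing-type shaded face has both of its neighboring white faces in the same polyhedral region (the one containing that crossing), while a non-prime shaded face is by construction the interface between the two polyhedral regions that the corresponding non-prime arc separates. Since the white sides of $Q$ lie in white faces from \emph{different} polyhedral regions, each shaded side of $Q$ must lie on a non-prime shaded face.

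Next, I would examine the local combinatorics of a non-prime shaded face together with the ideal vertices of the white faces on either side of it. The construction in \cite{fkp:gutsjp} makes these shaded faces small: each non-prime shaded face is adjacent to exactly one ideal vertex of $V$ and one ideal vertex of $W$, and its two ideal edges in $V$ (respectively $W$) flank that single ideal vertex. From this it follows that the endpoints of $\beta_V$ lie on the two ideal edges of $V$ incident to a common ideal vertex, so $\beta_V$ cuts off that ideal vertex; the same holds for $\beta_W$ in $W$.

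The main obstacle will be pinning down precisely the local combinatorial structure of non-prime shaded faces from the polyhedral construction in \cite{fkp:gutsjp}. Once this local picture is in hand, the conclusion of the lemma is a direct local check. If the local analysis turns out to be delicate, a useful fallback is to apply the inverse clockwise map $\phi_{R_V}^{-1}$ of Lemma \ref{lemma:clockwise} to the normal square in the lower polyhedron $P_{R_V}$ glued to $Q$ along $\beta_V$; this produces a second normal square in the upper polyhedron whose intersection with $V$ is a clockwise rotation of $\beta_V$, and comparing these two arcs in $V$ should yield the required single-ideal-vertex conclusion.
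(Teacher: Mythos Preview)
The paper's own ``proof'' is a one-line citation to \cite[Proposition 4.13]{fkp:gutsjp}, so there is no in-paper argument to compare against. That said, your primary approach has a genuine gap in its description of the shaded faces of the upper polyhedron.

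Your dichotomy --- ``crossing-type shaded faces'' each adjacent to a single polyhedral region, versus ``non-prime shaded faces'' sitting at the interface of two regions --- does not match the actual construction. As the present paper recalls (see the discussion just before Figure~\ref{fig:upper-pinc-wSi}), shaded faces in the upper polyhedron are assembled from \emph{tentacles}: a tentacle has its head on one side of a state circle, runs down along a segment, and then continues along the adjacent state circle until it hits another segment. A single shaded face is built from an innermost disk together with a branching tree of such tentacles, and can therefore be adjacent to many white faces, spread across several polyhedral regions. So the step ``since $V$ and $W$ lie in different polyhedral regions, each shaded side of $Q$ must lie on a non-prime shaded face'' is not justified, and the subsequent claim that such a face meets $V$ in only two edges flanking a single ideal vertex does not follow from the construction either.

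Your fallback idea --- transporting the lower square glued to $Q$ along $\beta_V$ back to the upper polyhedron via the clockwise map and comparing the two arcs in $V$ --- is much closer in spirit to how results of this type are actually proved in \cite{fkp:gutsjp}, and is a more promising line. But note that the clockwise map of Lemma~\ref{lemma:clockwise} is tied to a \emph{single} polyhedral region $R$: it takes normal squares in the upper polyhedron whose white sides both lie in $R$ to normal squares in the lower polyhedron for $R$. Here the square $Q$ has white sides in two different regions, so you cannot apply Lemma~\ref{lemma:clockwise} to $Q$ itself; you would need to work instead with the lower squares and the tentacle combinatorics directly, which is essentially the content of \cite[Proposition~4.13]{fkp:gutsjp}.
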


\begin{proof}
 This is a restatement of \cite[Proposition 4.13]{fkp:gutsjp}.
\end{proof}

In this paper, normal squares appear in decompositions of essential
annuli.  A square $Q$ in the upper polyhedron will be glued to normal
squares in the lower polyhedra at each of its white sides, and so
Lemma \ref{lemma:different-regions} will be useful.
%whenever $Q$ is a normal square in the decomposition of an essential
%annulus.

The next two sections include two cases.  First, that the squares
making up the annulus are diagrammatically incompressible, and second,
that they are diagrammatically compressible.  Lemma
\ref{lemma:different-regions} puts restrictions on the
diagrammatically incompressible case, and so we investigate such
annuli first.

%%%%%%%%%%%%%%%%%%%%%%%%%%%%%%%%%%%%%%%%%%%%%%%%%%%%%%%%%%%%%%%%%
%%%%%%%%%%%%%%%%%%%%%%%%%%%%%%%%%%%%%%%%%%%%%%%%%%%%%%%%%%%%%%%%%
\section{Diagrammatically incompressible annuli}\label{sec:incompr}
In this section, we determine the form of any essential annulus that
intersects a polyhedral region in a diagrammatically incompressible
way. Then, in Lemma \ref{lemma:incomprannuli}, we determine how diagrammatically incompressible annuli 
can fit into essential tori in the link complement.
 We emphasize  that all of the results of this section work for
general $A$--adequate diagrams, without any extra hypotheses.

Following Lackenby \cite{lackenby:volume-alt}, we define a \emph{fused
  unit} to be a portion of a checkerboard colored graph with the
following property.  Its boundary is an essential square, with two
opposite sides in black regions and the other two sides each
intersecting a white region adjacent to a crossing, as on the left of
Figure \ref{fig:fused-unit}.  In the figure, the question marks can
represent any checkerboard graph corresponding to an alternating
tangle.  

\begin{figure}
  \includegraphics{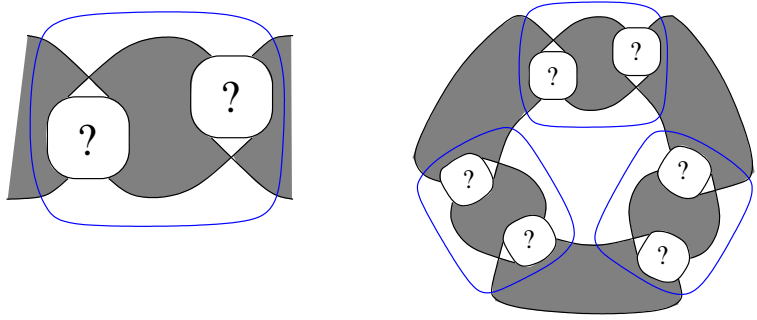}
  \caption{ A fused unit and an example of a cycle of three fused units.}
  \label{fig:fused-unit}
\end{figure}

\begin{lemma}\label{lemma:para-incompr-lower}
Let $D(K)$ be an $A$--adequate diagram, and let
$(E, \bdy E)\subset (M_A, \widetilde{S_A})$
be a diagrammatically incompressible annulus.
Then $E$ lies in a single polyhedral region, and the lower polyhedron
in that region is a cycle of $n \geq 2$ fused units.  Moreover,
portions of $E$ that lie in the lower polyhedron are squares
encircling a fused unit, as the blue curves in Figure
\ref{fig:fused-unit}.
\end{lemma}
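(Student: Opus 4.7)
My plan is to first confine $E$ to a single polyhedral region and then decode the combinatorial structure inside the corresponding lower polyhedron. By Lemma \ref{lemma:tori-cutto-annuli}, the annulus $E$ is cut by white faces into a cyclic sequence of normal squares. Each such square lies in a single polyhedron, and since lower polyhedra glue only to the upper polyhedron across white faces, the squares of $E$ alternate between the upper polyhedron and lower polyhedra. For each upper square $Q$ of $E$, suppose its two white sides were glued to lower squares lying in two \emph{different} polyhedral regions. Then Lemma \ref{lemma:different-regions} would force both white arcs of $Q$ to cut off a single ideal vertex, contradicting the diagrammatic incompressibility of $E$. Hence every upper square of $E$ has both white sides in a common region $R$, and walking around the cyclic sequence shows that the lower squares of $E$ all lie in the single lower polyhedron $P$ corresponding to $R$.

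Next I would analyze a single lower square $Q \subset P$. It has two shaded sides on shaded faces of $P$ and two white sides on white faces $W_1, W_2$, and by diagrammatic incompressibility each white arc $Q \cap W_j$ skips at least one ideal vertex on the side $Q$ encircles. Using the dictionary between the face structure of $P$ and the graph $H_A$ inside the polyhedral region $R$, the smaller ball that $Q$ cuts off in $P$ corresponds to a sub-picture of $H_A$ bounded by two shaded arcs (on shaded faces) and two white arcs (on $W_1$ and $W_2$). Primality of $P$, together with the condition that neither white arc cuts off just one ideal vertex on the encircled side, should force this sub-picture to consist of a single crossing band flanked by pieces of shaded face, i.e.\ a fused unit as in Figure \ref{fig:fused-unit}. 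I expect this identification to be the main obstacle of the proof, since it requires reading off the local combinatorics from \cite[Chapter 2]{fkp:gutsjp} carefully and ruling out larger or more complicated sub-pictures using primality together with the no-ideal-vertex-cutoff condition.

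Finally, to obtain the global statement, I would assemble the local description using the clockwise map of Lemma \ref{lemma:clockwise}. Consecutive lower squares of $E$ are joined by intervening upper squares; the clockwise map matches up the shaded sides of these squares inside $P$, so the fused units encircled by consecutive lower squares share shaded sides and fit together into a closed cyclic chain. Thus $P$ itself is a cycle of $n$ fused units, with the lower squares of $E$ encircling them one by one. The lower bound $n \geq 2$ will then follow from the annulus topology of $E$: a single fused unit with just one upper-lower square pair would produce a closed surface rather than an annulus with two boundary components on $\widetilde{S_A}$, contradicting the assumption that $E$ is an essential annulus.
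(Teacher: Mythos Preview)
Your first paragraph is correct and matches the paper. The problem is in your second and third paragraphs, which diverge from the paper's argument and contain a genuine gap.

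You try to show that a \emph{single} lower square $Q \subset P$, by virtue of being diagrammatically incompressible in a prime polyhedron, must encircle a fused unit. This is not true in general, and primality alone will not give it to you: a prime checkerboard polyhedron can admit many diagrammatically incompressible normal squares that do not bound fused units. The fused-unit structure is not a local property of one square; it emerges from how \emph{consecutive} squares of $E$ interact. The paper's approach is to push every upper square into $P$ via the clockwise map (Lemma \ref{lemma:clockwise}), obtaining a full cycle $S_1, \ldots, S_{2n}$ of squares in $P$ with the odd ones original and the even ones clockwise images. Then \cite[Lemma 4.10]{fkp:gutsjp} is the key external input: if adjacent $S_k, S_{k+1}$ had disjoint white sides they would each cut off a single vertex, contradicting incompressibility, so they must intersect in both white faces. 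Since even squares differ from odd ones by a single clockwise rotation in each white face, the three squares $S_{2n}, S_1, S_2$ are forced into the configuration of Figure \ref{fig:2squares} (right), which exhibits $S_1$ as the boundary of a fused unit. Repeating this around the cycle gives the global picture. You never invoke \cite[Lemma 4.10]{fkp:gutsjp}, and without it there is no mechanism forcing adjacent squares to sit in the required relative position.

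Your argument for $n \geq 2$ is also incorrect. With $n=1$ there are two squares (one upper, one lower), and they do glue up to a surface with boundary on $\widetilde{S_A}$, not a closed surface. The paper rules out this case by checking orientations of the gluing and showing the result is a M\"obius band rather than an annulus (Figure \ref{fig:2squares}, left).
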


\begin{proof}
Put $E$ into normal form with respect to the polyhedral
decomposition of $M_A$.  This will cut $E$ into a sequence of
squares, alternating in upper and lower polyhedra.  

Consider a square $S \subset E$ in the upper polyhedron. If the two
white sides of $S$ lie in different polyhedral regions, Lemma
\ref{lemma:different-regions} implies that it cuts off an ideal vertex
in some white face, contradicting the hypothesis of diagrammatic
incompressibility.  Hence all squares of $E$ have their white sides in
a single polyhedral region.

For a square $S \subset E$ in the upper polyhedron, Lemma
\ref{lemma:clockwise} lets us apply the clockwise map and obtain a
square $\phi(S)$ in the lower polyhedron.  Consider these squares, as
well as the squares of $E$ originally in the lower polyhedron.  Label
the squares in the lower polyhedron $S_1, S_2, \dots, S_{2n}$, where
$S_i$ with even $i$ are the clockwise images of squares from the upper
polyhedron.

By \cite[Lemma 4.10]{fkp:gutsjp}, if any pair of adjacent squares
$S_k$ and $S_{k+1}$ does not have intersecting white sides, then those
squares must cut off single vertices in each of their white sides,
implying both are diagrammatically compressible to essential product
disks, which again contradicts the hypotheses.

So suppose that each $S_k$ and $S_{k+1}$ intersect in one, hence by
\cite[Lemma 4.10]{fkp:gutsjp}, both white sides.  
By Lemma \ref{lemma:clockwise}, the white
sides of odd-numbered squares differ from those of even-numbered squares by a clockwise
rotation. 
This allows us to sketch the form of the diagram, essentially following
Lackenby's proof of \cite[Theorem 14]{lackenby:volume-alt}.

First, suppose there are just two squares $S_1$ and $S_2$.  Then they
intersect in exactly two white faces of the same lower
polyhedron.  In each white face, $S_2$ differs from $S_1$ by a single
clockwise rotation, as shown in Figure \ref{fig:2squares}, left.  Moreover,
the white sides of the squares glue up with orientations shown in that
figure.  But now, note that these two squares glue to form a
M\"obius band, not an annulus, which is a contradiction.

\begin{figure}
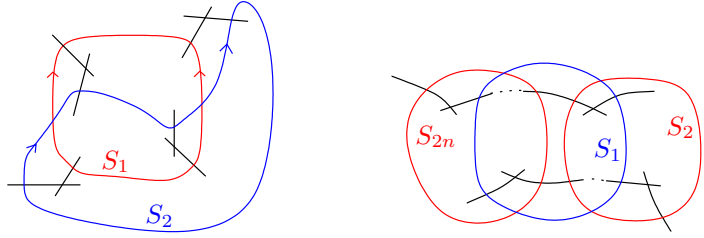

 \input{figures/2squares.pstex_t}
 \hspace{.5in}
 \input{figures/para-incompress.pstex_t}
 \caption{Left: If annulus is formed of only two squares. Right:
   Squares $S_1$, $S_2$, and $S_{2n}$ must be as shown.}
 \label{fig:2squares}
\end{figure}

So there must be at least four squares.  Then $S_1$ differs from $S_2$
in one of its white faces by a single clockwise rotation, and $S_1$
differs from $S_{2n}$ in the other white face of $S_1$ by a single
clockwise rotation.  The fact that $S_2$ and $S_{2n}$ are disjoint
\cite[Lemma 4.8 (3)]{fkp:gutsjp} implies that they must lie in the
lower polyhedron as shown in Figure \ref{fig:2squares}, right.  Note
this implies that $S_1$ bounds a fused unit.

Applying the same argument to $S_3$, $S_4$, and $S_2$, and continuing
through all squares with odd indices, we find each odd square bounds a
fused unit, and these are arranged in a cycle as claimed in the
lemma.
\end{proof}

\begin{lemma}\label{lemma:para-incompr-upper}
Let $D(K)$ be an $A$--adequate diagram.  Suppose $(E, \bdy E) \subset
(M_A, \widetilde{S_A})$ is an embedded essential annulus such that $E$
is diagrammatically incompressible. Then there is a solid torus
$V\subset M_A$ whose boundary consists of $E$ and an annulus $F
\subset \widetilde{S_A}$.

Furthermore, each of the annuli $E,F \subset \bdy V$ winds once around
the meridian of $V$ and $n$ times around the longitude of $V$, for the
same integer $n \geq 2$ as in Lemma \ref{lemma:para-incompr-lower}. In
other words, the two curves of $E \cap F$ have slope $1/n$ on the
boundary of $V$.
\end{lemma}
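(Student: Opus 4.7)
The plan is to exploit the cyclic structure provided by Lemma \ref{lemma:para-incompr-lower}, which tells us that $E$ lies in a single polyhedral region whose lower polyhedron is a cycle of $n \geq 2$ fused units, and that the $n$ portions of $E$ in the lower polyhedron are precisely the boundary squares encircling these fused units. The solid torus $V$ will be built from this cyclic structure by combining the ``inner'' regions of the cycle in the lower polyhedron with the connecting pieces in the upper polyhedron.

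First I would identify the solid torus $V$ explicitly. The $2n$ squares making up $E$ are arranged cyclically: $n$ encircle fused units in the lower polyhedron, and $n$ lie in the upper polyhedron between consecutive fused units. The annulus $E$ locally separates $M_A$ into two sides; I would let $V$ denote the component that lies on the ``inner'' side of the cyclic arrangement, so that in the lower polyhedron $V$ meets the interiors of the fused units and in the upper polyhedron $V$ is joined through the clockwise rotations of Lemma \ref{lemma:clockwise}. Since $\bdy E \subset \widetilde{S_A}$, the remainder $F = \bdy V \setminus \operatorname{int}(E)$ must lie on the shaded faces of the polyhedral decomposition and hence on $\widetilde{S_A}$.

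Second, I would verify that $V$ is a solid torus and $F$ is an annulus. The cycle of $n$ fused units furnishes $V$ with an explicit cell decomposition: each fused unit, together with the adjacent upper-polyhedron material, contributes a $3$-ball cell, and these $n$ cells are glued cyclically along disks. A direct combinatorial check shows that the cyclic gluing of $n$ balls by disks yields $V \cong D^2 \times S^1$. Since $\bdy V$ is then a torus and $E$ is an embedded annulus on $\bdy V$, the complement $F$ is also an annulus.

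Third, I would compute the slope. A meridian disk of $V$ can be taken as a cross-section through a single fused unit, and a longitude runs once around the cyclic direction. From the proof of Lemma \ref{lemma:para-incompr-lower}, consecutive squares $S_k$ and $S_{k+1}$ of $E$ are related by a single clockwise rotation in their shared white face, and on $\bdy V$ this rotation corresponds to a partial meridional shear. Tracking these $2n$ rotations around the cycle shows that a boundary curve of $E$ completes exactly one full meridional turn while traversing $n$ longitudinal turns, yielding slope $1/n$ on $\bdy V$. The main obstacle is this slope computation: the cumulative effect of the clockwise rotations must be identified precisely with a meridional twist, and I anticipate this is cleanest done by passing to the $n$-fold cyclic cover of $V$ in which the cyclic symmetry of the fused units unwinds and the twisted annulus $E$ lifts to a standard (untwisted) annulus, from which the slope $1/n$ downstairs can be read off.
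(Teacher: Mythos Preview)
Your outline is on the right track, but there is a genuine gap: you never actually analyze the upper polyhedron. Lemma~\ref{lemma:para-incompr-lower} tells you everything about the $n$ odd-indexed squares in the lower polyhedron, but it says nothing about how the $n$ even-indexed squares sit in the upper polyhedron, and this is where most of the work lies. Your phrase ``in the upper polyhedron $V$ is joined through the clockwise rotations of Lemma~\ref{lemma:clockwise}'' is not enough: the clockwise map compares a square in the upper polyhedron to one in the lower polyhedron via a single white face, but it does not by itself tell you that the even squares, together with strips of shaded face, bound a ball in the upper polyhedron. Without that, you cannot conclude that $V$ is a solid torus, nor that $F$ is an annulus, nor identify a meridian disk.

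The paper fills this gap by reconstructing the relevant portion of the graph $H_A$ from the fused-unit structure of the lower polyhedron, then using the tentacle combinatorics of the upper polyhedron to show that consecutive even squares $S_{2k}$ and $S_{2(k+1)}$ are joined by a strip of a single (simply connected) shaded face. This shows that the even squares cut a \emph{prism} out of the upper polyhedron: $n$ lateral faces from the $S_{2k}$, $n$ lateral faces from shaded strips, and two $2n$-gon white faces top and bottom. The odd squares cut an identical prism from the lower polyhedron. Gluing these two prisms along their white $2n$-gon faces gives the solid torus $V$, and those white faces become the meridian disks. The slope then follows immediately: $E$ meets each meridian disk $n$ times, and the single clockwise shift between $S_{2n}$ and $S_2$ forces one longitudinal intersection, giving slope $1/n$. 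Your proposed $n$-fold cover argument could in principle recover the slope, but only after you have the prism structure in hand; and once you have it, the direct count is simpler.
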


\begin{proof}
By Lemma \ref{lemma:tori-cutto-annuli}, white faces of the polyhedral
decomposition of $M_A$ cut $E$ into squares, alternating between lying
in the upper and lower polyhedra.  Denote the squares by $S_1, S_2,
\dots, S_{2n}$, where $S_i$ for even $i$ lies in the upper polyhedron.
(Note in the previous proof, $S_i$ for even $i$ indicated the
\emph{images} of these squares in the lower polyhedron.)

By Lemma \ref{lemma:para-incompr-lower}, the lower polyhedron $P$ associated to $E$ 
 is a cycle of at
least two fused units, as in Figure \ref{fig:fused-unit}.  Moreover,
the squares $S_i$ for $i$ odd encircle a fused unit.

Now recall from Section \ref{subsec:polyhedra} that a lower polyhedron
corresponds to a polyhedral region, i.e.\ a nontrivial region of the
complement of the state circles and a maximal collection of non-prime
arcs.  A white face in a lower polyhedron $P$ is glued via homeomorphism
to exactly one white face in the upper polyhedron.  However, shaded
faces are not glued.  In a region of the upper polyhedron corresponding to a
shaded face of $P$, there may be additional segments and state circles in the
graph $H_A$.  However, we may use information on white faces, as well
as positions of state circles appearing in the given lower polyhedron,
to sketch portions of the graph $H_A$.   Figure \ref{fig:upper-paraincomp}
shows the most general possible
graph.

\begin{figure}[h]
  \includegraphics{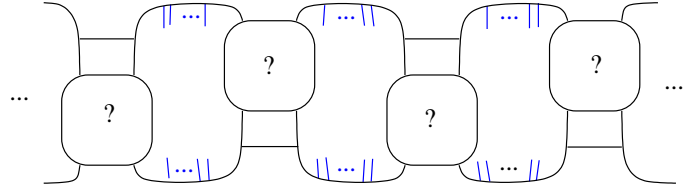}
  \caption{The graph $H_A$ must have the form shown when one of
    the lower polyhedra is a cycle of fused units.}
  \label{fig:upper-paraincomp}
\end{figure}

Note in particular that the white faces above and below the diagram in
Figure \ref{fig:upper-paraincomp} are mapped to the white faces inside
and outside the cycle of fused units.  Hence they only meet segments
of $H_A$ that correspond to ideal vertices of the lower polyhedron.  Thus
the only possible segments in these white faces are either between
state circles as shown, or inside the blocks labeled with question
marks.  However, \emph{a priori}, there may be segments on the other
sides of the state circles meeting these two white faces.  Such
segments are illustrated in blue in Figure \ref{fig:upper-paraincomp}.

The combinatorics of the upper polyhedron can be read off of the graph $H_A$, as described in \cite[Chapter 2]{fkp:gutsjp} or \cite{fkp:survey}.  What is relevant to this discussion is that shaded faces run along segments in so--called \emph{tentacles}.  A tentacle is a portion of shaded face that begins on one side of a state circle (the ``head''), runs along the right side of a segment when the head is oriented to be up, and then runs to the right along the adjacent state circle until it terminates at a segment.  Examples of tentacles in different colors are shown in Figure~\ref{fig:upper-pinc-wSi}.

The boundaries of these tentacles make up the edges of the white faces in our fused units.  Hence, we use what we know of the positions of white sides of $S_i$ for $i$ odd to sketch white sides of $S_j$ for $j$ even into the upper polyhedron.  In particular, white sides in the lower polyhedron are glued to those in the upper, with $S_2$ glued to $S_1$ on one side, and $S_3$ on the other side.  Therefore, these edges are as shown in Figure~\ref{fig:upper-pinc-wSi}.  A pair of endpoints of white edges connect to an edge in a shaded face.  Thus we may color the shaded faces at the ends of white edges in the same color.  This is also shown in Figure~\ref{fig:upper-pinc-wSi}.

\begin{figure}
  \input{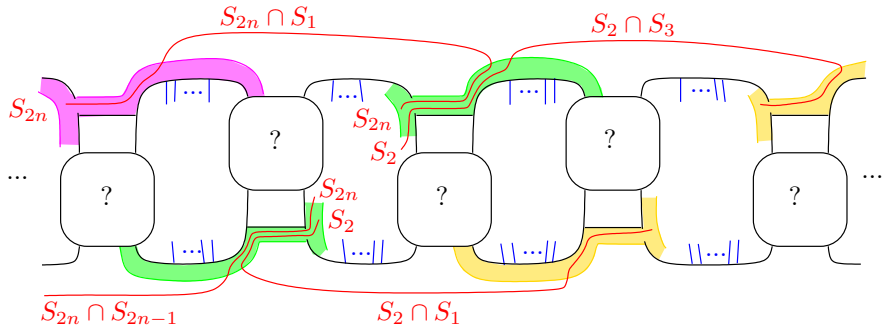}
  \caption{Upper polyhedron with squares $S_{2k}$ is as above.}
  \label{fig:upper-pinc-wSi}
\end{figure}

Consider the shaded face shown in the center of Figure~\ref{fig:upper-pinc-wSi}, which is shaded green.  Note that $S_{2n}$ and $S_2$ must both run through this green face, and both meet the same pair of tentacles on either side.  Because shaded faces are simply connected \cite[Theorem~3.12]{fkp:gutsjp}, these arcs on $S_2$ and $S_{2n}$ must run parallel to each other, bounding a strip of green shaded face between them.  The same argument implies $S_{2k}$ and $S_{2(k+1)}$ bound a strip of shaded face between them, for all $k$.

Now, notice that squares $S_i$ with even index cut out a prism from the upper polyhedron. This prism has $n$ sides coming from the $S_{2k}$, $n$ sides coming from these strips of shaded face, and a top and bottom white face, each a polygon with $2n$ sides.

From Lemma~\ref{lemma:para-incompr-lower} and Figure~\ref{fig:fused-unit}, 
we see that the $S_i$ with odd index also cut out a prism from the lower polyhedron, with $n$ sides coming from squares $S_{2i+1}$, and $n$ sides coming from strips of shaded face, and a top and bottom white face, each a polygon with $2n$ sides.

These two prisms glue along the white faces to give a solid torus $V$, with boundary consisting of the annulus $E$, as well as an annulus $F \subset \widetilde{S_A}$ obtained by gluing the strips coming from  shaded faces end to end. The white faces of the prisms form meridian  disks of the solid torus $V$.

Observe from Figure~\ref{fig:upper-pinc-wSi} that the annulus $E$ intersects each meridian disk $n$ times. Observe as well that as we travel around $E$, the square $S_{2n}$ in the upper polyhedron is glued to $S_1$ in the lower, then $S_2$ in the upper polyhedron, and so on --- with $S_{2n}$ differing from $S_2$ by a single $1/n$ clockwise twist of the white face. Therefore, the annulus $E$ composed of these squares will intersect the longitude of $V$ once. Since the core  curve of $E$ intersects the meridian $n$ times and the longitude once, with clockwise twisting, its slope on $\bdy V$ is $1/ n$. The core curve of $F$ is parallel to that of $E$, and has the same slope.
\end{proof}

Now recall that we are interested in an essential torus $T$, which is
cut into annuli in $M_A$ by Lemma \ref{lemma:tori-cutto-annuli}.

\begin{define}\label{def:annulus-adjacent}
Let $T \subset S^3 \setminus K$ be an essential torus, decomposed into
annuli as in Lemma \ref{lemma:tori-cutto-annuli}.  We say that an
annulus $E \subset M_A$ is \emph{adjacent} to $E' \subset M_A$ if
there is a single annulus in $N(S_A)$ between them.
\end{define}

\begin{lemma}\label{lemma:incomprannuli}
Let $D(K)$ be an $A$--adequate diagram.  Let $T \subset S^3 \setminus K$ be
an essential torus in $S^3\setminus K$, and let $E, E' \subset T
\cap M_A$ be adjacent annuli.  Then at least one of $E, E'$ must be
diagrammatically compressible.  In particular, if $E$ is adjacent to
itself, then it must be diagrammatically compressible.
\end{lemma}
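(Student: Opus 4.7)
Proof plan. Argue by contradiction: suppose both $E$ and $E'$ are diagrammatically incompressible. By Lemma \ref{lemma:para-incompr-upper}, each bounds a solid torus in $M_A$: $\partial V = E \cup F$ and $\partial V' = E' \cup F'$ with $V, V' \subset M_A$ and $F, F' \subset \widetilde{S_A}$. Let $A \subset N(S_A)$ denote the annulus on $T$ between $E$ and $E'$, with one boundary circle $c \subset \partial E \cap \partial F$ and another $c' \subset \partial E' \cap \partial F'$ (when $E = E'$ we have $V = V'$, $F = F'$, and both circles of $\partial A$ lie in $\partial F$). My plan is to exhibit an ambient isotopy of $T$ in $S^3 \setminus K$ that decreases the intersection count $|T \cap \widetilde{S_A}|$ by $2$, contradicting the minimality assumed in Lemma \ref{lemma:tori-cutto-annuli}.

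First, use $V$ to isotope $E$ rel $\partial E$ onto a parallel copy of $F$ lying just inside $\mathrm{int}(N(S_A))$, and symmetrically use $V'$ to push $E'$ to a parallel copy of $F'$ just inside $\mathrm{int}(N(S_A))$. Since $V, V' \subset M_A$ each meet $T$ only along $E$ and $E'$ respectively---a fact readable off the prism construction of $V$ in the proof of Lemma \ref{lemma:para-incompr-upper} and Figure \ref{fig:upper-pinc-wSi}---these are valid embedded isotopies, producing a torus $T_1$ isotopic to $T$ in $S^3 \setminus K$. In $T_1$, the three consecutive annuli that replaced $E, A, E'$ form a band $B \subset \overline{N(S_A)}$ whose only contact with $\widetilde{S_A}$ is along the two interior circles $c, c'$, apart from the two external boundary circles that are fixed in place so that $B$ remains attached to $T_1 \setminus B$. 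Since the push-offs of $F, F'$ and the annulus $A$ all approach $\widetilde{S_A}$ transversely from the $N(S_A)$-side, a local isotopy supported near $c$ and $c'$ splices them into a smooth band $B'$ contained entirely in $\mathrm{int}(N(S_A))$; the resulting torus $T_2$ satisfies $|T_2 \cap \widetilde{S_A}| = |T \cap \widetilde{S_A}| - 2$, contradicting minimality.

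When $E = E'$ the band $B$ reduces to $F \cup A$ (with both circles of $\partial A$ as interior circles) and the same construction drops $|T \cap \widetilde{S_A}|$ from $2$ to $0$. This forces $T$ to be isotopic into $\mathrm{int}(N(S_A))$, which is impossible because $N(S_A)$ is an $I$-bundle over the essential spanning surface $S_A$ and $\pi_1(N(S_A)) \cong \pi_1(S_A)$ is free (as $\partial S_A = K \neq \emptyset$), so $N(S_A)$ admits no $\pi_1$-injective torus. In either case the desired contradiction is reached. The technical point requiring the most care is verifying that $V \cap T = E$ (and likewise for $V'$), so that the initial isotopies are genuinely embedded; this should follow from the normal form of $T$ combined with the explicit structure of $V$ as a prism cut out of the polyhedra by the squares of $E$, since a competing piece of $T \cap M_A$ penetrating $\mathrm{int}(V)$ would contradict the normal-form requirement.
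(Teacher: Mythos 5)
Your key step fails at the very first move. You propose to ``use $V$ to isotope $E$ rel $\partial E$ onto a parallel copy of $F$,'' i.e.\ to push $E$ across the solid torus $V$ into $N(S_A)$ and thereby cancel two curves of $T \cap S_A$. Such an isotopy rel boundary exists only if the region between $E$ and $F$ --- which is all of $V$ --- is a product $E \times I$, and that forces the cores of $E$ and $F$ to be longitudes of $V$, winding once around its core. But Lemma \ref{lemma:para-incompr-upper} says precisely the opposite: the curves of $E \cap F$ have slope $1/n$ on $\partial V$ with $n \geq 2$ (from Lemma \ref{lemma:para-incompr-lower}), so $E$ meets each meridian disk of $V$ in $n \geq 2$ arcs, $V$ is a fibered neighborhood of an exceptional fiber of order $n$, and $E$ is \emph{not} parallel rel $\partial E$ to $F$ through $V$. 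Consequently no intersection curves can be removed, there is no contradiction with the minimal position of Lemma \ref{lemma:tori-cutto-annuli}, and your argument (including the self-adjacent case, which rests on the same push) does not get off the ground. The winding $n\geq 2$ is not a technicality to be isotoped away --- it is the entire difficulty of the lemma.

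The paper handles exactly this difficulty differently: assuming both $E$ and $E'$ (or $E$ glued to itself across a M\"obius band in $S_A$) are diagrammatically incompressible, the component $X$ of $(S^3\setminus K)\cut T$ containing $V$ becomes a union of two solid tori glued along annuli of slopes $1/n_1$ and $1/n_2$ with $n_i \geq 2$, hence a Seifert fibered space over the disk with two exceptional fibers. Since $T$ is incompressible, $X$ is not a solid torus, so it must be a torus knot exterior in $S^3$, and the contradiction comes from an arithmetic computation with the Seifert invariants and Euler number (equations \eqref{eqn:indiv-slope}--\eqref{eqn:xnb}), showing the required meridian--longitude framing cannot exist. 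Your proposal replaces this global Seifert-fibered analysis with a local isotopy that the slope computation rules out; to repair it you would need an argument of the paper's type rather than a refinement of the push-off.
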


\begin{proof}
Suppose $E$ is diagrammatically incompressible.  We will examine 
more closely the solid torus $V$ and the annulus $F$ of Lemma
\ref{lemma:para-incompr-upper}.  Since $T$ is a torus in $S^3$, it is
separating.  Let $X$ be the component of $(S^3\setminus K)\cut T$ that
contains $V$ and $X\cut S_A$ denote the remnants of $X$ in $S^3\cut S_A$.
The manifold $X\cut S_A$ contains $V$ as one of its
components.  The annulus $F \subset \widetilde{S_A} \cut T$
projects to some component $R$ of $S_A\cut T$.
\smallskip

\underline{Case 1.}  Suppose $R$ is orientable.  Then it is an
annulus.  When we glue $\widetilde{S_A}$ to itself, to undo the
cutting along $S_A$ and recover $S^3\setminus K$, the annulus $F
\subset \widetilde{S_A} \cap \bdy V$ must be glued to another annulus
$F'$, which is the boundary of some other component of $X \cut S_A$.
Assuming that the adjacent annulus $E' \subset T \cap M_A $ is also
diagrammatically incompressible, Lemma \ref{lemma:para-incompr-upper}
implies there must be a solid torus $V' \subset X \cut S_A$, with
boundary $\bdy V' = E' \cup F'$, such that the core curve of $E'$ has
slope $1/n$.  By Lemma \ref{lemma:para-incompr-lower}, $n$ is at least $2$.
But then $\bdy E$ and $\bdy E'$ are both glued onto
$\bdy R$, hence $E$ and $E'$ form all of $T \cap M_A$.  Hence $X$
consists of exactly two solid tori, glued along annuli of slopes
$1/n_i$ on their respective boundaries.

\smallskip

\underline{Case 2.} Suppose $R$ is non-orientable.  Then, since $F
\subset \bdy N(R) \subset \widetilde{S_A} $ is a double cover of $R$,
it follows that $R$ must be a M\"obius band.  To form $S^3 \setminus
K$ from $S^3 \cut S_A$, we glue appropriate boundary components of
$N(S_A)$.  The annulus $F$, as the boundary of a regular neighborhood
of a M\"obius band, must be glued to itself, with the regular
neighborhood $N(R)$ collapsing onto $R$.  But then under this gluing,
the boundary components of the annulus $E$ are glued only to boundary
components of the annulus $E$.  This means $S_A$ cuts $T$ into only
the annulus $E$, and $V$ is the only component of $X \cut S_A$.
Therefore, $X$ must be the result of gluing $V$ to itself along an
annulus with slope $1/n$ on its boundary.

Here is another way to think of this gluing. The regular neighborhood
$N(R) \subset N(S_A)$ is a solid torus.  Since $F$ double-covers the
M\"obius band $R$ and runs around this solid torus twice, the slope of
$F$ on $\bdy N(R)$ is $1/2$.  We conclude that $X$ is obtained by
gluing together two solid tori along annuli whose slopes are $1/n$ and
$1/2$.

\smallskip

In both cases, $X$ consists of two solid tori $V_1$ and $V_2$, glued
along annuli of slopes $1/n_i$, where each $n_i$ is at least $2$.
This means that $X$ is a Seifert fibered space with base space a disk
and two singular fibers. (See Hatcher \cite{hatcher:3manifolds} for
background.)

Next, we will calculate the Seifert invariants of $X$.  For simple,
oriented curves $x,y$ on $T_i = \bdy V_i$, let $\langle x, y \rangle$
denote their algebraic intersection number. Also abusing notation,
we will use the same symbol to denote a curve on $T_i$ and its
homology class in $H_1(T_i)$.  Let $Q_i$, $H_i$ denote a pair of a
cross-section curve and fiber of the fibration on $T_i$, oriented so
that $\langle Q_i , H_i \rangle=1$.  Let $\mu_i, \lambda_i$ be a
meridian and longitude of $T_i$, chosen so that $\mu_i$ bounds a disk
in $V_i$ and $\langle \mu_i , \lambda_i \rangle= 1$.  Furthermore, we
can normalize $\lambda_i$ so that the gluing annulus, which is
foliated by fibers parallel to $H_i$, satisfies $H_i= \mu_i + n_i
\lambda_i$.  Since $Q_i=a_i \mu_i + b_i \lambda_i$ for some
coefficients $a_i, b_i$, it follows that
 \begin{equation}\label{eqn:indiv-slope}
\langle Q_i , H_i \rangle =   n_i a_i-  b_i = 1.
 \end{equation} 
Solving for $\mu_i$, we have $\mu_i= n_i Q_i -b_i H_i$. Thus the slope of each singular fiber of $X$  
is $-b_i/n_i$, making the Euler number of  $X$ equal to
\begin{equation}\label{eqn:euler-num}
e = -\frac{b_1}{n_1} - \frac{b_2}{n_2} = \frac{-n_2 b_1 - b_2 n_1}{n_1 n_2}.
\end{equation}
 
Since $T = \bdy X$ is incompressible, $X$ cannot be a solid torus.
Thus $X$ admits a unique Seifert fibration \cite[Theorem
  2.3]{hatcher:3manifolds},
which means that its Seifert invariants are uniquely
determined modulo $1$ \cite[Proposition 2.1]{hatcher:3manifolds}.
Since $X$ embeds in $S^3$ and $\bdy X$ is a single
torus, $X$ must be the complement of a torus knot, say $H$. On $\bdy X$, $H$ may be identified with a regular fiber of the fibration of $X$.
Let $\mu, \lambda$ denote the meridian and canonical longitude of $\bdy X$, again with the convention that $\langle \mu , \lambda \rangle=1$.
Since the geometric intersection of the meridian and $H$ is 1, if $Q$ is a cross section curve on $\bdy X$, then
\begin{equation}\label{eqn:mu-slope}
\mu=Q + xH, 
\end{equation}
so that $\langle \mu,  H \rangle = \langle Q , H \rangle=1$.
A minimum  genus Seifert surface for $\lambda$  will be horizontal with
respect to the Seifert fibration of $X$. By the Claim in the proof of
\cite[Proposition 2.2]{hatcher:3manifolds},  the boundary slope of this surface with respect to the $(Q,H)$ framing is equal to the Euler number $e$. Therefore, by equation \eqref{eqn:euler-num},
\begin{equation}\label{eqn:lambda-slope}
\lambda=  n_1n_2 Q + (-n_2b_1-b_2n_1)  H.
\end{equation}

Since $\langle \mu , \lambda \rangle=1$,  equations  \eqref{eqn:mu-slope}  and \eqref{eqn:lambda-slope} imply
\begin{equation}\label{eqn:xnb}
-x n_1  n_2-n_2b_1-b_2n_1=1.
\end{equation}
 By \eqref{eqn:indiv-slope}, we obtain
  $n_1n_2a_1+n_2n_1a_2- (n_1b_2+b_1n_2) = (n_1+n_2) $,
  which combined with \eqref{eqn:xnb} gives
    $$n_1n_2(a_1+a_2+x)+ 1=(n_1+ n_2)  
\quad \Rightarrow \quad
a_1+a_2+x= 1/n_1+1/n_2-1/{n_1n_2}\, ,$$
which is impossible since  $a_i, x$ are integers and $n_i \geq 2$.
 \end{proof}

\begin{remark}
Lemma \ref{lemma:incomprannuli} can also be proved using diagrammatic
techniques.  The idea is as follows.  From Figures
\ref{fig:fused-unit} and \ref{fig:upper-pinc-wSi}, one can determine
how the annulus $F\subset \widetilde{S_A}$ lies in the diagram.  If
two annuli $F$ and $F'$ as above are mapped to a single annulus in
$S_A$ (Case 1 of the above proof), then their cores must map to the
same curve on $S_A$.  A careful analysis of annuli $F$ and $F'$, each
lying in the diagram as specified by Figures \ref{fig:fused-unit} and
\ref{fig:upper-pinc-wSi}, yields a contradiction.  Similarly, if $F$
is glued to itself when $\widetilde{S_A}$ is mapped to $S_A$ (Case 2
of the above proof), then the core of $F$ must wrap around the same
curve on $S_A$ twice.  Again a diagrammatic analysis will reveal that
this is impossible.
 \end{remark}

%%%%%%%%%%%%%%%%%%%%%%%%%%%%%%%%%%%%%%%%%%%%%%%%%%%%%%%%%%%%%%%%%
%%%%%%%%%%%%%%%%%%%%%%%%%%%%%%%%%%%%%%%%%%%%%%%%%%%%%%%%%%%%%%%%%
\section{Diagrammatically compressible annuli}\label{sec:compr}
Lemma \ref{lemma:incomprannuli} implies that  the intersection of an essential torus 
with 
the corresponding polyhedral decomposition must contain components that intersect the decomposition in a diagrammatically compressible way.
In this section, we determine information on  such annuli under the hypothesis of Theorem \ref{thm:main}.

Suppose an annulus decomposes into a square that is diagrammatically
compressible in one white face, i.e.\ it cuts off a single ideal
vertex there.  Then it must be diagrammatically compressible in every
white face, either by Lemma \ref{lemma:different-regions}, or by an
application of the clockwise map, Lemma \ref{lemma:clockwise}, and
\cite[Lemma 4.10]{fkp:gutsjp}.  Diagrammatically compressible squares
are closely related to essential product disks.

\begin{define}\label{def:EPD}
An \emph{essential product disk}, or \emph{EPD}, is a properly
embedded essential disk in $M_A$ whose boundary meets the parabolic
locus of $M_A$ twice.  
\end{define}

When an essential product disk $R$ lies in a single polyhedron in the
polyhedral decomposition of $M_A$, we may think of it as a
quadrilateral with two sides on shaded faces, coming from
$\widetilde{S_A}$, and two sides running over ideal vertices of the
polyhedron, which correspond to the parabolic locus.  We may pull $R$
off the ideal vertices into adjacent white faces, obtaining a normal
square $Q$, with two sides on shaded faces and two sides on white
faces.  The two sides of $Q$ on white faces each cut off a single
ideal vertex; that is  the square is diagrammatically compressible in both
of its white faces.  Conversely, if $Q$ is a normal square, each of
whose white sides cuts off a single ideal vertex of the ambient white
face, then pulling $Q$ onto those ideal vertices (that is, performing a
\emph{parabolic compression})  produces either an EPD or a
square encircling a single ideal vertex.

\begin{remark}\label{remark:simple}
Throughout this section, we will assume that the sides of normal squares in shaded faces run
monotonically through tentacles, without unnecessary
backtracking. This assumption can be easily satisfied by normal isotopy. The precise terminology from \cite{fkp:gutsjp} is that the sides of squares are \emph{simple with respect to the shaded faces}, as in
\cite[Definition 3.2]{fkp:gutsjp}. 
\end{remark}

\begin{convention}\label{conv:coloring}
Let $Q$ be a normal square, and $W$ a white face in which $Q$ cuts off a single ideal vertex. We may color the shaded faces met by $Q$ \emph{orange} and \emph{green}, so that the single vertex of $W$ cut off by $Q$ is a triangle whose three edges, in counter--clockwise order, are orange--green--white. For instance, the right--most white face in Figure \ref{fig:EPD} satisfies this convention.
\end{convention}

The next lemma places strict restrictions on the form of a square coming from
an EPD in the upper polyhedron.  The proof relies on the hypothesis that 2--edge loops in
$\GA$ belong to twist regions (Definition \ref{def:2edges-belong}), as well as the classification of EPDs into seven combinatorial types  \cite[Theorem~6.4]{fkp:gutsjp}. This proof is likely the most technical argument of this paper, and can be omitted without missing the thread of the argument.

%%%%%%%%%%%%%%%%%%%%%%%%%%%%%%%%%%%%%%%%%%%%%%%%%%%%%%%%%%%%%%%%%

\begin{figure}
  \input{figures/squareA.pstex_t}
 \hspace{.3in}
 \includegraphics{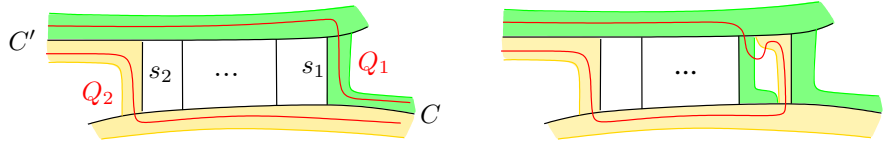}
\caption{Possible forms of squares parabolically compressing to an
  EPD.  The square on the left is called a square in a twist region
  (Definition \ref{def:square-in-twist}). The right--most white face satisfies Convention~\ref{conv:coloring}.}
  \label{fig:EPD}
\end{figure}

\begin{lemma}\label{lemma:EPD-typeB}
Suppose a prime, $A$--adequate diagram is such that $\GA$ satisfies
the 2--edge loop condition.  Suppose that a normal square $Q$
parabolically compresses to an EPD in the upper polyhedron.  Then,
possibly after sliding one white side of $Q$ past an ideal vertex, we
obtain a normal square $P$ with one of the two forms shown in Figure
\ref{fig:EPD}.  In particular, the boundary of $P$ runs over distinct
segments of the same twist region, with two sides on shaded faces
adjacent to the same state circle on a side of that twist region.
\end{lemma}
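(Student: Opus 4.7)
The plan is to start from the EPD classification in \cite[Theorem 6.4]{fkp:gutsjp}, which says that any EPD in the upper polyhedron falls into one of seven combinatorial types, each characterized by how its two shaded sides sit relative to state circles, tentacles, and non-prime arcs. Given our normal square $Q$ and the EPD $R$ obtained by parabolic compression, we can read off the combinatorial type of $R$ and hence the combinatorial shape of $Q$ in the upper polyhedron. The goal is to work through each type and argue that under the hypotheses (prime, $A$--adequate, and the 2--edge loop condition), the EPD must in fact be of the restricted form shown in Figure \ref{fig:EPD}, possibly after a single normal isotopy that slides one white side of $Q$ past an ideal vertex.

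The first step is to rule out types involving non-prime arcs or exotic tentacle configurations by invoking primality of $D(K)$ and $A$--adequacy. Several of the seven EPD types only exist when the diagram has non-prime switches or when a state circle bounds a region with specific structure incompatible with a prime diagram; those are dispatched immediately. The remaining types will force the two shaded sides of $Q$ to touch a single pair of parallel segments that connect the same two state circles. At this stage I invoke the $A$--twist reduced property (Lemma \ref{lemma:twist-reduced}), together with the fact that two parallel segments between the same pair of vertices of $\GA$ form a 2--edge loop: by the 2--edge loop hypothesis such a loop arises from the short resolution of a twist region $R_0$. Hence the two segments traversed by $\partial P$ are necessarily crossings of one twist region, resolved with the short (all--$A$) resolution that produces a row of bigons in $\GA$.

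The next step is the sliding move. After identifying the twist region $R_0$, either $Q$ already sits with its two shaded sides on a single state circle on one side of $R_0$ (the right picture in Figure \ref{fig:EPD}), or one of its white sides cuts off an ideal vertex adjacent to a bigon bordering the twist region. In the latter case, sliding that white side across the adjacent ideal vertex moves $Q$ into an equivalent normal square whose shaded sides now both rest on the same state circle, yielding either the left or right configuration in Figure \ref{fig:EPD}. I use \cite[Lemma 4.10]{fkp:gutsjp} and the fact that $Q$ is diagrammatically compressible in both of its white faces (which follows from Lemma \ref{lemma:different-regions} and \ref{lemma:clockwise}) to justify that this slide preserves the normal square structure and the parabolic compression.

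The main obstacle will be the combinatorial case analysis: several of the seven EPD types look superficially different yet must be shown to reduce, after the slide, to exactly the two pictures in Figure \ref{fig:EPD}. The trickiest case is when $Q$'s two shaded sides lie on the two opposite sides of the twist region, seemingly pinning down two different state circles $C$ and $C'$; here I will argue that this configuration either violates $A$--adequacy (a state circle would run from a crossing back to itself, producing a 1--edge loop in $\GA$) or else corresponds to the left configuration of Figure \ref{fig:EPD}, in which the two ``short'' segments $s_1, s_2$ of the twist region are separated only by a row of bigons and $Q$ encircles the entire twist region. Combining these observations yields the stated normal form for $P$ and the conclusion that its boundary runs over distinct segments of a single twist region with shaded sides on the same bounding state circle.
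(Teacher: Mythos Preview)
Your outline has the right ingredients but assembles them in the wrong order, and this creates a real gap.

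The classification \cite[Theorem~6.4]{fkp:gutsjp} does not apply to an arbitrary normal square that parabolically compresses to an EPD: it applies only to squares satisfying a fixed orientation convention at \emph{both} white faces (the paper's Convention~\ref{conv:coloring}, equivalently \cite[Lemma~6.1]{fkp:gutsjp}). The slide past an ideal vertex is precisely the move that arranges this convention at the second white face, and it must be performed \emph{before} you can invoke the seven-type classification, not after. This is not a cosmetic point: the elimination of types $\mathcal{D}$ and $\mathcal{F}$ in the paper's argument hinges on exactly this orientation. Under the 2--edge loop condition, those types force the second white side to cut off a triangle with the \emph{wrong} orientation, contradicting the convention you just set up. Without the slide first, you have no mechanism to exclude $\mathcal{D}$ and $\mathcal{F}$.

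Your elimination of the other types is also too coarse. Primality of $D(K)$ does \emph{not} by itself rule out non-prime arcs in $H_A$; prime diagrams routinely have non-prime arcs in the polyhedral decomposition. So types $\mathcal{E}$ and $\mathcal{G}$ are not ``dispatched immediately.'' Type $\mathcal{G}$ is killed directly by the 2--edge loop condition (there is a segment on one side and a non-prime arc on the other side of the two parallel segments, so they cannot bound a string of bigons). Types $\mathcal{C}$ and $\mathcal{E}$ require a genuine construction: you use the 2--edge loop condition to force a shaded face to run unobstructed along one side of the twist region, then build a simple closed curve meeting the diagram exactly twice with crossings on both sides, contradicting primeness. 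Finally, once only types $\mathcal{A}$ and $\mathcal{B}$ survive, one still needs to check that the 2--edge loop condition forces each of them into one of the two specific pictures in Figure~\ref{fig:EPD}; the $A$--twist reduced lemma is a consequence of the hypothesis, not the tool that does this work.
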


\begin{proof}
Given a diagrammatically compressible square $Q$, select one of the
sides that cuts off a single ideal vertex in a white face $W$.  Color the
two adjacent shaded faces according to Convention~\ref{conv:coloring}.

With this labeling of shaded faces, consider the side of $Q$ in its other white
face, $W'$.  This side also cuts off a single ideal vertex, but that ideal vertex may be a triangle with opposite orientation compared to that of Convention~\ref{conv:coloring}.  If this occurs, replace $Q$ with a new square $P$ by sliding the white side $Q \cap W'$ across the adjacent ideal vertex, so that it lies in a new white face $W''$ and cuts off a triangle oriented in the opposite direction.  This new square $P$ must satisfy Convention~\ref{conv:coloring} in both of its white faces.

We now apply \cite[Theorem 6.4]{fkp:gutsjp}, which applies to normal squares such as $P$ that satisfy Convention~\ref{conv:coloring} in both white faces (equivalently, satisfy~\cite[Lemma~6.1]{fkp:gutsjp}). This theorem implies that the normal square $P$ is of one of seven types, shown in~\cite[Figure~6.1]{fkp:gutsjp}.  Assuming that 2--edge loops of $\GA$ belong to twist regions, we may rule out all these types except the first two, labeled $\mathcal{A}$ and $\mathcal{B}$ in~\cite[Figure~6.1]{fkp:gutsjp}.  These are exactly the two types shown in Figure~\ref{fig:EPD}.  We now describe how to rule out the remaining types.

\smallskip
{\underline{Type $\mathcal{C}$.} }  Suppose the normal square is of type $\mathcal{C}$, reproduced in Figure~\ref{fig:typeC}, left.  The 2--edge loop shown belongs to a twist region by hypothesis, and so a twist region must lie either on the inside of the segments shown or the outside.  Note there is a segment meeting the lower state circle on the opposite side of the 2--edge loop on the inside.  This cannot happen if the twist region lies on the inside.  Hence the twist region must lie on the outside of the two segments shown, as in Figure~\ref{fig:typeC}, middle.  Then the orange face must run all the way across the outside of the twist region, meeting no tentacles or non-prime arcs, as shown in that figure.  Draw an arc through the orange face all the way across the twist region.  On the far side, connect the arc across the state circle to the portion of the square $P$ in the green face.  Now continue to follow $P$ to the right.  It crosses the state circle once more, before joining the orange face where we began, as in Figure~\ref{fig:typeC}, right.  Replace segments of $H_A$ by crossings.  We obtain a simple closed curve in the diagram of the link that meets the link exactly twice, with crossings on either side.  This contradicts the hypothesis that the diagram is prime.

\begin{figure}
  \includegraphics{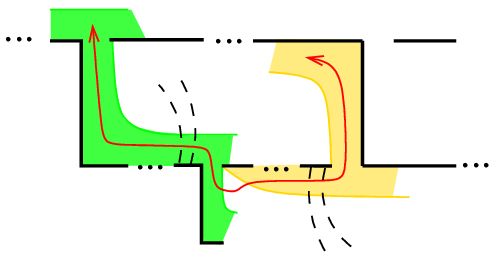}
  \hspace{.1in}
  \includegraphics{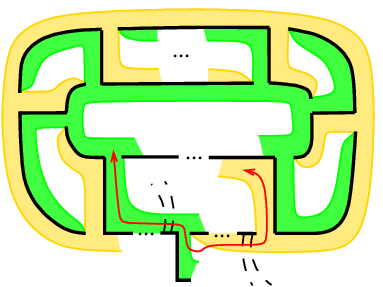}
  \hspace{.1in}
  \includegraphics{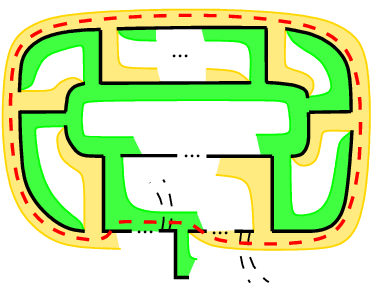}
  \caption{Left: A normal square of type $\mathcal{C}$ runs over a portion of the upper polyhedron of the form shown.  Middle: The precise form  imposed by  2--edge loop condition. Right: Dotted line gives a contradiction to the primeness of the diagram.}
  \label{fig:typeC}
\end{figure}

In the interest of space, we do not reproduce \cite[Figure~6.1]{fkp:gutsjp} for the analysis of the rest of the cases. However, all the cases are handled by a variation of the above argument.

\smallskip
{\underline{Type $\mathcal{E}$.} }  This can also be ruled out by an appeal to the primeness of the diagram.  Because the 2--edge loop in type $\mathcal{E}$ comes from a twist region, the orange face shown, adjacent to the state circle at the bottom, must continue across the bottom of that state circle, meeting no segments or non-prime arcs, until it lies directly opposite the green face running next to the second sement of the 2--edge loop.  Note that the orange face is also directly opposite the green face on the left side of the green non-prime arc.  Now we may draw an arc through the orange face, from the point where it is opposite the non-prime arc to the point where it is opposite the segment of the 2--edge loop.  Connect the arc across the state circle to the portion of $P$ lying in the green face.  This gives a simple closed curve meeting $H_A$ exactly twice on state circles, which in turn gives a loop in the diagram meeting the diagram exactly twice, again contradicting  the hypothesis that the diagram is prime. 

\smallskip 
{\underline{Type $\mathcal{G}$.} } The segments forming the 2--edge loop shown in that figure must belong to a twist region by hypothesis, meaning there can be only segments and state circles of the short resolution of a twist region between those two segments on one side.  But on one side of the two segments there is a segment on the opposite side of the state circle, and on the other side there is a non-prime arc.  Hence type $\mathcal{G}$ cannot occur.

\smallskip
{\underline{Types $\mathcal{D}$ and $\mathcal{F}$.} }  Because the 2--edge loop shown belongs to a twist region, one green tentacle must terminate in a bigon, with no additional green tentacles or non-prime arcs connected to it.  For type $\mathcal{D}$, this green tentacle lies to the right in \cite[Figure~6.1]{fkp:gutsjp}.  For type $\mathcal{F}$, it lies ot the left.  In either case, since $P$ runs through that tentacle, it must meet its second ideal vertex at the end of that green tentacle shown.  In other words, at the end of that green tentacle it must cut off a single vertex in a white face, which defines a triangle.  However, note that the triangle will have the opposite orientation from Convention~\ref{conv:coloring}, contradicting our assumption.

\smallskip
It follows that our normal square $P$ is of type $\mathcal{A}$ or $\mathcal{B}$.  To obtain Figure~\ref{fig:EPD}, we make the following observations.  First, because the 2--edge loop shown in either figure belongs to a twist region, the two segments shown must bound a string of bigons, i.e.\ segments and state circles from the short resolution of a twist region.  In particular, there can be no segments on the opposite side of the state circles shown between these two segments. This implies that some face runs straight across the top of the twist region, meeting no additional segments or state circles, and the orange face runs straight across the bottom of the twist region, meeting no additional segments or state circles.  

For type $\mathcal{A}$, the green face must run across the top of the twist region.  If the boundary of $P$ runs into the green face at the top, then it must run straight across the top, and straight across the bottom through the orange face, and hence $P$ is as shown on the left of Figure~\ref{fig:EPD}.  It may be the case for type $\mathcal{A}$ that the boundary of $P$ runs down a green tentacle, without running across the top.  Then in this case, a portion of $P$ in the orange face runs between these segments, so the two segments form a bigon, and the EPD must meet the ideal vertices of the bigon.  Given our
choice of orientation, the normal square $P$ will have the form of Figure~\ref{fig:EPD}, right.

For type $\mathcal{B}$, we need to show that the green face runs across the top of the twist region.  This follows because the portion of $P$ adjacent to the orange segment on the right is running downstream in a tentacle which forms a bigon of the twist region.  No other tentacles or non-prime arcs can meet this bigon, or the 2--edge loop would not belong to a twist region.  Hence $P$ meets the white face at the end of this tentacle, and joins the green on the opposite side.  It follows that the green face must run across the twist region, with $P$ running across as well to meet in this vertex.  Thus
the normal square $P$ is as shown on the right of Figure~\ref{fig:EPD}.
\end{proof}

The square on the left of Figure \ref{fig:EPD} has a particularly
relevant form.  

\begin{define}\label{def:square-in-twist} 
A diagrammatically compressible square $Q$ with sides $Q_1$, $Q_2$ in
shaded faces is defined to be a \emph{square in a twist region $R$} if
it is of the form shown on the left of Figure \ref{fig:EPD}. More
precisely, there are state circles $C$ and $C'$ and segments $s_1$,
$s_2$ of $H_A$ between $C$ and $C'$, determining $R$, such that:
\begin{enumerate}[(i)]
\item $Q_1$ and $Q_2$ run on adjacent shaded faces near $C'$;
\item $Q_1$ and $Q_2$ run on adjacent shaded faces near $C$;
\item one side of $Q$, say $Q_1$, runs on the top of the twist region
  along $C'$ and then along $s_1$, while $Q_2$ runs along $s_2$ and
  then on the bottom of $R$ along $C$.
\end{enumerate}
\end{define}

\begin{lemma}\label{lemma:EPD-typeA'}
Suppose a prime, $A$--adequate diagram is such that $\GA$ satisfies
the 2--edge loop condition.  Suppose $T \subset S^3 \setminus K$ is an
essential torus, $E \subset T \cap M_A$ is a diagrammatically
compressible annulus, and $Q \subset E$ is a normal square that
parabolically compresses to an EPD in the upper polyhedron.  Then we
may perturb $E$ by a normal isotopy that extends to $T$, so that $Q$
becomes a square in a twist region.
% Definition \ref{def:square-in-twist}.
\end{lemma}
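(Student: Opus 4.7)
The plan is to apply Lemma~\ref{lemma:EPD-typeB} to pin down the combinatorial type of $Q$, and then perform additional normal isotopies (extending to $E$ and $T$) to bring $Q$ into the specific square-in-twist-region form of Figure~\ref{fig:EPD}, left. Throughout, the key point is that any local modification of $Q$ in the upper polyhedron is matched by a corresponding modification of the adjacent normal square of $E$ in the lower polyhedron, and hence extends to a normal isotopy of $E$ and of $T$.

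By Lemma~\ref{lemma:EPD-typeB}, possibly after sliding one white side of $Q$ past an ideal vertex, we obtain a normal square $P$ of one of the two forms in Figure~\ref{fig:EPD}. The slide itself is a local normal isotopy in the upper polyhedron, and I first observe that it extends to $E$ and $T$. The affected white side of $Q$ is glued to a white side of an adjacent normal square $Q'\subset E$ lying in the lower polyhedron. By Lemma~\ref{lemma:clockwise}, $Q'$ is related to $Q$ via the clockwise map $\clock$, so the slide of $Q$ past an ideal vertex $v$ is mirrored by the corresponding slide of $Q'$ past $\clock(v)$. This compatible pair of slides yields a normal isotopy of $Q\cup Q'$ that extends through the abutting annuli in $N(S_A)$ to a normal isotopy of $E$, and then of $T$.

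If $P$ is already in the left form of Figure~\ref{fig:EPD}, we are done. Otherwise $P$ is in the right form, where the two white sides of $P$ meet ideal vertices at the ends of a bigon of a twist region $R$. In this case I would perform a further normal isotopy that pushes $P$ across the bigons of $R$: both white sides of $P$ slide simultaneously past ideal vertices of the short resolution of $R$, moving $P$ one bigon at a time toward the boundary of $R$. Because $R$ contains only finitely many bigons, this process terminates with $P$ positioned so that its white sides meet segments $s_1$ and $s_2$ adjacent to the bounding state circles $C$ and $C'$. At that point $P$ satisfies Definition~\ref{def:square-in-twist}.

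The main technical obstacle is verifying that each bigon-push of $P$ extends consistently to $E$. This again uses Lemma~\ref{lemma:clockwise}: the adjacent square $Q'$ in the lower polyhedron differs from $P$ by a single clockwise rotation of the white face, and this rotation preserves the symmetric bigon structure of a twist region, so a push of $P$ past a bigon ideal vertex in the upper polyhedron is mirrored by a push of $Q'$ past the corresponding vertex in the lower polyhedron. No obstruction arises, because by hypothesis the $2$--edge loops in $\GA$ all belong to twist regions, so the interiors of the bigons of $R$ contain no extraneous segments of $H_A$ that could block the isotopy.
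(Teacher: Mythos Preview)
Your argument has two genuine gaps.

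First, the slide of Lemma~\ref{lemma:EPD-typeB} is not an isotopy of $E$ that Lemma~\ref{lemma:clockwise} hands you for free. That lemma does not say the glued lower square $Q'$ equals $\clock(Q)$; it says $\clock(Q)$ differs from $Q'$ by a rotation, and it says nothing about how a slide of $Q$ out of a white face $W$ into an adjacent face $W'$ is reflected downstairs. When the white side of $Q$ leaves $W$ across the vertex $v$, the lower square $Q'$ must acquire a white side in whatever face is adjacent to $W$ across $v$ \emph{in the lower polyhedron}, and there is no reason this face is $W'$ (indeed $W'$ may lie in a different polyhedral region and hence in a different lower polyhedron altogether). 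The paper avoids this difficulty entirely: it never performs the slide as an isotopy of $E$. Instead it uses the auxiliary square $P$ only to read off the combinatorial shape of the \emph{original} square $Q$, and a short case analysis shows that $Q$ itself is either already a square in a twist region or is itself of the right-hand form in Figure~\ref{fig:EPD}.

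Second, your treatment of the right-hand form misreads the picture. In that configuration only \emph{one} white side of the square lies in a bigon face of $R$; the other white side is outside the twist region. The correct move is a single isotopy of that one arc \emph{within} its bigon white face, pushing it from cutting off one vertex of the bigon to cutting off the other. Because this isotopy is supported in a neighborhood of a single white face, it automatically carries along the glued lower square (which meets the same face) and hence extends to $E$ and to $T$. After this one move $Q$ already satisfies Definition~\ref{def:square-in-twist}; no iterated pushing across bigons is required, and in particular $s_1,s_2$ in that definition need not be the outermost segments of $R$.
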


\begin{proof}
Suppose $Q$ parabolically compresses to an EPD.  Then, by Lemma
\ref{lemma:EPD-typeB}, there is a related normal square $P$ that has
one of the two forms of Figure \ref{fig:EPD}.  For each of the forms,
$P$ may either agree with $Q$ or be obtained from $Q$ by moving one
white side across an ideal vertex of the upper polyhedron.

Suppose $P$ is as on the left of Figure \ref{fig:EPD}.  If $P$ and $Q$
agree, then we are finished.  Otherwise, $P$ differs from $Q$ in that
one of its white sides is on the opposite side of an ideal vertex from
$Q$.  We may assume that this white side is located to the right of
the figure shown.  If the ideal vertex in question does not contain
the right--most segment of the twist region of Figure \ref{fig:EPD},
left, then adjusting $P$ to form $Q$ has no effect on the region
shown, and the diagram is as claimed.  If the ideal vertex does
contain the right--most segment, then $Q$ must have a white side on
the opposite side of the corresponding ideal vertex, which lies in the
right--most bigon of the twist region shown, and $Q$ now has the form
of a square on the right of Figure \ref{fig:EPD}.  

Now suppose that $Q$ is as shown on the right of Figure \ref{fig:EPD},
and either $P$ agrees with $Q$, or $P$ is as on the left of Figure
\ref{fig:EPD}, as in the previous paragraph.  Then the white side of
the square on the right of the figure lies in a bigon white face.
This white face is glued to a bigon in a lower polyhedron, and the
annulus runs through the bigon.  Isotope the annulus through the
bigon, isotoping $Q$ to have a white side cutting off a vertex on the
opposite side of the bigon.  This pulls the shaded sides of $Q$ along as
well, pulling the side of $Q$ in the right--most orange tentacle to
only meet the head of that tentacle, and pulling the side in the green
to run downstream adjacent to a green tentacle.  The isotopy can be
performed in a neighborhood of the white face in $M_A$, affecting only
the square $Q$ and the square in the lower polyhedron glued to $Q$ at
this face, but only in a neighborhood of this single white face.
Moreover, the isotopy may be extended into a small neighborhood of
$\widetilde{S_A}$, to extend to all of $T$.  After this isotopy, $Q$
has the form claimed in the lemma.

The only remaining case is that the square $P$ has the form on the
right of Figure \ref{fig:EPD}, but it differs from the original square
$Q$ in that we slid a white side of $Q$ through an ideal vertex.
Because we were able to choose the orientation on one vertex, we may
assume that the ideal vertex in question is at the right of that
figure.  Notice that the side of $P$ that must be adjusted lies in a
bigon in a twist region, with state circle $C'$ on top, and $C$ on
bottom of the twist region.  The ideal vertex cut off by $P$ is a
portion of the graph $H_A$ consisting of a small arc on $C'$, a
segment $s$ connecting $C'$ and $C$, a portion of $C$, and possibly
more segments.  Hence when we slide over this ideal vertex, the
portion of $P$ running through the orange tentacle adjacent $s$ slides
out of this tentacle, and a portion of the square in the green face
will be pulled through the green tentacle adjacent to $s$.  When
finished, the result will be as claimed in the statement of the lemma.
\end{proof}

The next lemma will allow us to show that, assuming the 2--edge loop condition, 
no diagrammatically
incompressible annuli in $M_A$ come from essential tori.  This is done
in Lemmas \ref{lemma:upper-idealvertex} and
\ref{lemma:lower-idealvertex}.

\begin{lemma}\label{lemma:square-adjvertex}
Let $D(K)$ be an $A$--adequate diagram, with all--$A$ state surface
$S_A$, and let $T$ be an incompressible torus.  Suppose that a normal
square $Q \subset T \cap M_A$ lies in the upper polyhedron, with sides
$Q_1$ and $Q_2$ in shaded faces, which satisfies the following
property.  There exists a point $p$ on a state circle and arcs
$q_1\subset Q_1$ and $q_2\subset Q_2$ that can be isotoped to lie on
either side of an $\epsilon$ neighborhood of $p$, while maintaining
the condition that $Q_1$ and $Q_2$ are simple with respect to their
shaded faces (Remark \ref{remark:simple}).  

Then
$T\setminus\widetilde{S_A}$ consists of exactly two annuli, one in
$M_A$ and one in the $I$--bundle $N(S_A)$.  Moreover, subarcs of $q_1$
and $q_2$ are glued together in $T$ when we glue opposite sides of
$N(S_A)$, to recover $S^3\setminus K$.
\end{lemma}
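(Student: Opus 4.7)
The plan is to use the hypothesis to produce a bridge inside the $I$--bundle $N(S_A)$ that glues the two boundary circles of the ambient annulus $E \subset T \cap M_A$ containing $Q$ back to each other, forcing the cyclic decomposition of $T$ by $\widetilde{S_A}$ to close up after length two. The key point is that the arcs $q_1$ and $q_2$, once isotoped to opposite sides of a small neighborhood of $p$, lie on opposite sheets of $\widetilde{S_A}$ that are identified under the gluing $\widetilde{S_A} \to S_A$ reassembling $S^3 \setminus K$ from $M_A$ and $N(S_A)$.

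First I would unpack the hypothesis. A small neighborhood of $p$ in $S_A$ lifts to two parallel sheets of $\widetilde{S_A}$, which get glued together under the identification $\widetilde{S_A} \to S_A$. After the given isotopy --- which preserves simplicity in the shaded faces and hence extends to an ambient normal isotopy of $T$ by standard arguments --- the arcs $q_1$ and $q_2$ lie on these two opposite sheets above a common subarc of $S_A$. Consequently, inside $N(S_A)$ there is a thin rectangular slab of $T \cap N(S_A)$ whose two opposite short sides are subarcs of $q_1$ and $q_2$, one on each sheet. By Lemma \ref{lemma:tori-cutto-annuli}, $T \cap N(S_A)$ is a disjoint union of essential annuli, so this connected slab is contained in a single annular component $A \subset T \cap N(S_A)$, and both $q_1$ and $q_2$ lie on $\partial A$.

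Next, Lemma \ref{lemma:tori-cutto-annuli} also tells me that the shaded sides $Q_1, Q_2$ of the normal square $Q$ lie on opposite boundary circles $\partial_1 E, \partial_2 E$ of the ambient annulus $E$, since each white side of $Q$ runs across $E$ from one boundary circle to the other. Writing $q_i \subset \partial_i E$, the annulus $A$ therefore has one boundary circle on $\partial_1 E$ and the other on $\partial_2 E$. Since $T \setminus \widetilde{S_A}$ is a cyclic alternating sequence of annuli in $M_A$ and in $N(S_A)$, the existence of a single $A$ that connects both boundary circles of $E$ forces the cycle to close after length two, giving $T \setminus \widetilde{S_A} = E \sqcup A$. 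The identification claim then follows immediately from the first step: subarcs of $q_1$ and $q_2$ project to a common subarc of $S_A$, so under the reconstruction gluing they are identified into a single arc on $T$. The only technical subtlety I anticipate is in the first step --- verifying that the local isotopy on the shaded faces extends to an ambient normal isotopy of $T$ without disturbing the square decomposition --- which should follow routinely from the simplicity condition of Remark \ref{remark:simple} and the standard extension of boundary isotopies for normal surfaces in polyhedral decompositions.
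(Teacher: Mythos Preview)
Your argument has a genuine gap at the central step. You assert that ``inside $N(S_A)$ there is a thin rectangular slab of $T \cap N(S_A)$ whose two opposite short sides are subarcs of $q_1$ and $q_2$.'' But this is exactly the content of the lemma, not a consequence of the hypothesis. The arcs $q_1$ and $q_2$ lie on $\widetilde{S_A}$, and each is a boundary arc of some annulus in $T\cap N(S_A)$; call these annuli $E_1$ and $E_2$. The fact that $q_1$ and $q_2$ sit on opposite sheets of $\widetilde{S_A}$ near $p$ does \emph{not} imply that $E_1 = E_2$. A priori there could be several parallel vertical annuli of $T$ passing through the $I$--bundle near $p$, with $q_1$ attached to one and $q_2$ attached to another. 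Nothing in your argument rules this out; you simply assume a piece of $T$ bridges them.

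The paper's proof confronts this directly: it supposes $E_1 \neq E_2$, projects to $S_A$, chooses a transverse arc $\sigma$ from $p$ crossing the core curves $\tau_1, \tau_2$ once each, and analyzes how $E_1 \cap R$ and $E_2 \cap R$ sit in the rectangle $R = \sigma \times [-1,1]$. Using embeddedness and the simplicity hypothesis to forbid trivial returns, it shows one of the $E_i$ must have both its endpoints on $R$ inside the small tube $N_p$, which forces a new normal square $Q''$ strictly nested inside $Q'$ near $p$. Iterating produces infinitely many nested squares on $T$, a contradiction. This nesting argument is the substance of the lemma, and your proposal skips it entirely. Your closing remark also misidentifies the difficulty: the isotopy extension is routine, but the claim $E_1 = E_2$ is not.
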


\begin{proof}
The point $p$ on a state circle corresponds to a point, which we will
also call $p$, on the knot $K$.  Let $N_p$ be a tubular neighborhood
of $p$ in $S^3$.  We may take this neighborhood small enough that
$N_p\cap M_A$ lies entirely in the upper polyhedron, and $N_p\cap
N(S_A)$ is a trivial $I$--bundle, of the form $D\times [-1,1]$ for
some disk $D$ in $S_A$.  Moreover, we may isotope $q_1$ and $q_2$ (and
all of $T$), if necessary, so that $q_1$ and $q_2$ each run through
$N_p$.  Since $Q$ has opposite sides containing $q_1$ and $q_2$, and
since $Q$ lies in a ball (the upper polyhedron), we may isotope $Q$
relative to its boundary so that a sub-rectangle $Q'$ of $Q$ has opposite
sides on $q_1$ and $q_2$, and lies completely in $N_p$.

Now consider $Q'$.  This is a rectangle in the tube $N_p$ surrounding
the point $p$ on the knot $K$.  It has one boundary component, say
$q_1$, on $D\times \{-1\}$, and one boundary component, $q_2$, on
$D\times\{1\}$.  Notice it almost encircles $K$ to form an annulus
whose core is a single meridian, except that it is cut by $N(S_A)$.
Inside $N(S_A)$, both $q_1$ and $q_2$ are glued to annuli which are
isotopic to vertical annuli in the $I$--bundle.  The proof will be
complete when we show that $q_1$ and $q_2$ are glued to the same
vertical annulus in the $I$--bundle.

So suppose not.  Suppose $q_1$ is glued to the annulus $E_1$ and $q_2$
is glued to the annulus $E_2$.  Since $E_1$ and $E_2$ are both subsets
of $T$, they are disjoint.  They are also compact, connected, and they
lie inside the compact set $N(S_A)\setminus N_\delta(K)$, where
$N_\delta(K)$ is a sufficiently small tubular neighborhood of $K$.
Consider the projection $\pi \co N(S_A)\setminus N_\delta(K) \to S_A$.
This is a continuous map on a compact set, hence it is proper.  So
$\pi(E_1)$ and $\pi(E_2)$ are compact in $S_A$.  Since $E_i$ is the
image of a vertical annulus, it lies in a bounded neighborhood of a
core curve $\tau_i$ on $S_A$, $i=1, 2$.

Now, let $\sigma$ be a simple arc in $S_A$ with one endpoint on $p$,
exiting $N_p \cap S_A$ by crossing through $\pi(E_1)$ and $\pi(E_2)$,
and such that $\sigma$ meets each of $\tau_1$, $\tau_2$ transversely
exactly once, and the final endpoint of $\sigma$ is disjoint from
$\pi(E_1)$ and $\pi(E_2)$.  Note that if we restrict the $I$--bundle
$N(S_A)$ to $\sigma$, we obtain a trivial $I$--bundle over a line,
which is a rectangle $R \cong \sigma\times[-1,1]$.  Note also that $R$
intersects both $E_1$ and $E_2$ in $R\cap N_p$.  In particular, an arc
of intersection of $E_1 \cap R$ has endpoint in $N_p$ at a point on
$\sigma \times \{-1\}$, and an arc of intersection of $E_2 \cap R$ has
an endpoint in $N_p$ on $\sigma\times \{1\}$.

Consider the other endpoint of $E_1 \cap R$.  By choice of $\sigma$,
this endpoint must be either on $\sigma \times \{-1\}$ or on
$\sigma\times\{1\}$.  If $E_1\cap R$ has both endpoints on
$\sigma\times\{-1\}$, then there is a disk in $R$ with boundary on
$E_1\cap R$ and on $\sigma \times\{-1\}$.  We may isotope $E_1$
through this disk, removing the intersection of $E_1$ with $R$, hence
pushing the arc $q_1$ away from $p$.  Such an isotopy is impossible
under the assumption that the sides of $Q$ were simple.  Hence
$E_1\cap R$ has one endpoint on $\sigma \times \{-1\}$ and one on
$\sigma\times\{1\}$.  Similarly, $E_2\cap R$ has one endpoint on
$\sigma\times\{1\}$ and one on $\sigma\times\{-1\}$.

Because $E_1$ and $E_2$ are embedded, these endpoints cannot
interleave.  Thus either both endpoints of $E_1\cap R$ will lie in
$N_p \cap R$ or both endpoints of $E_2\cap R$ will lie in $N_p\cap
R$.  Say both endpoints of $E_2$ lie in $N_p\cap R$.

Now, $E_2$ must connect to an annulus in $M_A$ on both of its boundary
components.  One boundary component connects to an annulus containing
$Q$.  The other cannot connect to $Q$ because its initial endpoint on
$R$ lies interior to $q_1 \cap R$.  So it connects to some new
rectangle $Q''$.  But consider the shaded sides of $Q''$.  One lies
parallel to $q_1$, but interior to $q_1$ (i.e.\ closer to $K$) on
$\widetilde{S_A}$.  The other cannot also be parallel to $q_1$ in
$N_p$, or we could eliminate an intersection of $T$ with $S_A$.  But
$Q''$ cannot intersect $Q$, hence $Q''$ must lie parallel to $Q'$ in
$N_p$, and its opposite boundary component is parallel to $q_2$ on
$\widetilde{S_A}$, but closer to $K$.

Now we may repeat the entire above argument with $Q''$, to obtain a
square interior to $Q''$ in $N_p$.  Since each square we pick up at
each step is interior to all previous squares, we obtain an infinite
sequence of squares, each lying on $T$.  This is a contradiction.

Thus $q_1$ and $q_2$ are connected by a single annulus $E \subset
N(S_A)$, which can be isotoped to be vertical.  
\end{proof}

%If $\GA$ satisfies the 2--edge loop condition, and $Q$ is a square in
%the upper polyhedron that does not parabolically compress to a single
%ideal vertex, then Lemma \ref{lemma:EPD-typeA'} implies that the
%hypothesis of Lemma \ref{lemma:square-adjvertex} is satisfied for $Q$.
%This fact will be important in the remainder of this section.

\begin{lemma}\label{lemma:upper-idealvertex}
Suppose a prime, $A$--adequate is such that $\GA$ satisfies the
2--edge loop condition.  Then each diagrammatically compressible
square in the upper polyhedron coming from an essential torus
must bound a single ideal vertex.
\end{lemma}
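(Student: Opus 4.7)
The plan is to argue by contradiction: suppose $Q$ is a diagrammatically compressible square in the upper polyhedron coming from an essential torus $T$, and that $Q$ does not bound a single ideal vertex --- equivalently, $Q$ parabolically compresses to a genuine essential product disk rather than to a disk encircling a single ideal vertex. My aim is to combine the classification of EPDs in Lemma~\ref{lemma:EPD-typeA'} with the rigidity of Lemma~\ref{lemma:square-adjvertex} to reach a contradiction.

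First, I would invoke Lemma~\ref{lemma:EPD-typeA'}: after a normal isotopy of the annulus $E \subset T \cap M_A$ containing $Q$, extended to an isotopy of $T$, I may assume that $Q$ has the form of a square in a twist region $R$, as in Definition~\ref{def:square-in-twist}. In particular, the shaded sides $Q_1$ and $Q_2$ lie in adjacent shaded faces near the top state circle $C'$ of $R$, with their endpoints on $C'$ separated by a single ideal vertex of $C'$.

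Next, I would pick a point $p$ on that separating arc of $C'$ and take short subarcs $q_i \subset Q_i$ approaching $p$. The adjacency built into Definition~\ref{def:square-in-twist} allows me to normally isotope $q_1$ and $q_2$ to lie on opposite sides of an $\epsilon$-neighborhood of $p$, while keeping $Q_i$ simple with respect to their shaded faces. Lemma~\ref{lemma:square-adjvertex} then forces $T \setminus \widetilde{S_A}$ to consist of exactly two annuli --- the annulus $E$ (which is therefore adjacent to itself), and a single vertical annulus $F \subset N(S_A)$ --- with subarcs of $q_1$ and $q_2$ identified via $F$.

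The hard step will be extracting a contradiction from this picture. Because $F$ is vertical in the $I$-bundle, its projection to $S_A$ is a closed curve $\tau$ that realizes the required identification of $q_1$ with $q_2$. But clauses (i)--(iii) of Definition~\ref{def:square-in-twist} pin down how $Q_1$ and $Q_2$ project to $S_A$: $Q_1$ runs along the top of $C'$ and down the segment $s_1$, while $Q_2$ runs down $s_2$ and across the bottom of $C$, so \emph{a priori} the two boundary circles of $E$ project to distinct loops in $S_A$. My plan is to trace $\tau$ across the twist region $R$ in the diagram and show that the only way $\tau$ can close up consistently with this data is for $\tau$ either to bound a disk in $S^3 \setminus K$ or to be isotopic into $\partial N(K)$ --- in each case making $T$ compressible or boundary-parallel, and hence contradicting essentiality. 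This diagrammatic bookkeeping, similar in spirit to but more delicate than the Seifert-fibered calculation in the proof of Lemma~\ref{lemma:incomprannuli}, is the step I expect to require the most care.
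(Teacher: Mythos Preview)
Your setup through Lemmas~\ref{lemma:EPD-typeA'} and~\ref{lemma:square-adjvertex} is correct and matches the paper, but the contradiction you sketch at the end is both vague and aimed in the wrong direction. The paper's argument is much shorter and does not try to show that $\tau$ bounds a disk or is boundary-parallel; that route would be hard to make work, since $\tau$ is genuinely essential in $S_A$.

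The key point you are missing is to apply Lemma~\ref{lemma:square-adjvertex} \emph{twice}: once at a point $p' \in C'$ (from part~(i) of Definition~\ref{def:square-in-twist}, as you do) and once at a point $p \in C$ (from part~(ii), which you do not use). The conclusion is then that subarcs of $Q_1$ and $Q_2$ are glued to each other in $T$ near $p'$ \emph{and} near $p$. Consequently the portions of $Q_1$ and $Q_2$ lying between these two glued regions must project to homotopic arcs in $S_A$. But by part~(iii) of Definition~\ref{def:square-in-twist}, between $p'$ and $p$ the side $Q_1$ runs adjacent to the segment $s_1$ while $Q_2$ runs adjacent to the distinct segment $s_2$. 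Since $\GA$ is a spine for $S_A$, homotopic arcs must traverse the same sequence of edges of $\GA$; yet one arc uses the edge corresponding to $s_1$ and the other the edge corresponding to $s_2$, and these are distinct edges of $\GA$ (they form the $2$--edge loop). That is the contradiction. No analysis of how $\tau$ closes up, and no appeal to essentiality of $T$ beyond what Lemma~\ref{lemma:square-adjvertex} already used, is needed.
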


\begin{proof}
Suppose we have a square $Q \subset T \cap M_A$ that does not encircle
a single ideal vertex. Then $Q$ parabolically compresses to an EPD.
By Lemma \ref{lemma:EPD-typeA'}, we may isotope the torus $T$
containing $Q$ so that $Q$ is a square in a twist region. By parts
(i)-(ii) of Definition \ref{def:square-in-twist}, there are points
$p\in C$ and $p'\in C'$ for which the hypotheses of Lemma
\ref{lemma:square-adjvertex} are satisfied. By the conclusion of that
lemma, there is only one annulus of $T\cap M_A$, and the shaded sides
of $Q$ that run near $p'$ (resp. $p$) are glued to each other in $T$.

However, note that between these two pairs of glued arcs, the sides of
the square run adjacent to distinct segments of $H_A$: $Q_1$ runs
along $s_1$, while $Q_2$ runs along $s_2$.  By the above paragraph,
these sides of $Q$ are glued to each other when we collapse $N(S_A)$
to $S_A$ to recover the torus $T$.  Hence the shaded sides of $Q$ on
$\widetilde{S_A}$ have homotopic projection to $S_A$.  The graph $\GA$
is a spine for $S_A$, so we may homotope the arcs in $S_A$ to run over
the same edges of $\GA$ in the same order.  But because the arcs run
adjacent to distinct segments of $H_A$, they run over distinct edges
(corresponding to these segments) in $\GA$.  This is a contradiction.
\end{proof}

\begin{lemma}\label{lemma:lower-idealvertex}
Suppose $D(K)$ is is a prime, $A$--adequate diagram such that $\GA$
satisfies the 2--edge loop condition.  Suppose an essential torus $T$
contains a diagrammatically compressible annulus $E \subset T \cap
M_A$.  Then $E$ is the only component of $T \cap M_A$, and every
normal square comprising $E$ encircles a single ideal vertex in its
ambient polyhedron.
\end{lemma}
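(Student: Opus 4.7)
The plan is to extend Lemma~\ref{lemma:upper-idealvertex} from squares in the upper polyhedron to squares in the lower polyhedra, and then derive uniqueness of $E$ in $T \cap M_A$ by a direct application of Lemma~\ref{lemma:square-adjvertex}.

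By Lemma~\ref{lemma:upper-idealvertex}, every normal square of $E$ in the upper polyhedron encircles a single ideal vertex; in particular, both of its white sides cut off a single ideal vertex in the ambient white face. Now let $Q' \subset E$ be any normal square in a lower polyhedron $P$. Its two white sides are shared, via the gluings across white faces, with white sides of adjacent upper-polyhedron squares of $E$. Because the property of cutting off a single ideal vertex is intrinsic to the arc in a white face, both white sides of $Q'$ also cut off a single ideal vertex of $P$. The remaining issue is whether the two ideal vertices so cut off coincide (so that $Q'$ encircles a single vertex) or are distinct.

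Suppose for contradiction that they are distinct segments $s_1 \neq s_2$ of $H_A$. Then $Q'$ parabolically compresses to an essential product disk in $P$, whose shaded sides lie in two shaded faces of $P$ each connecting $s_1$ and $s_2$. Since the combinatorics of $P$ is determined by $H_A$ restricted to the associated polyhedral region, $s_1$ and $s_2$ must share both state--circle endpoints, giving a $2$--edge loop in $\GA$. By the $2$--edge loop hypothesis, $s_1$ and $s_2$ belong to a single twist region $R$, and the two shaded faces of $P$ between them are bigons from the short resolution of $R$. By an isotopy of $T$ analogous to that of Lemma~\ref{lemma:EPD-typeA'} but carried out inside the lower polyhedron, we may arrange $Q'$ so that its shaded sides run along the two state circles bounding $R$ and along the distinct segments $s_1, s_2$ --- the lower-polyhedron analog of a square in a twist region (Definition~\ref{def:square-in-twist}).

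At this point I would invoke an analog of Lemma~\ref{lemma:square-adjvertex} for squares in a lower polyhedron: its proof is a local argument in a tubular neighborhood of a point on the link, and carries over without essential change. Applied at a point $p$ on a state circle bounding $R$, this forces $T \cap M_A$ to be a single annulus and identifies subarcs of the shaded sides of $Q'$ in $T$ under the collapse $\widetilde{S_A} \to S_A$. But those subarcs project to distinct edges of the spine $\GA$ of $S_A$, yielding exactly the same contradiction as in the proof of Lemma~\ref{lemma:upper-idealvertex}. Hence every square of $E$ encircles a single ideal vertex. Uniqueness of $E$ in $T \cap M_A$ then follows by applying Lemma~\ref{lemma:square-adjvertex} directly to any upper-polyhedron square $Q \subset E$: since $Q$ is a small loop around a single segment $s$, its shaded sides can be isotoped to opposite sides of an $\epsilon$-neighborhood of an endpoint of $s$ on a state circle, and the conclusion of the lemma is precisely that $T \cap M_A$ has a single component. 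The main obstacle is making rigorous the lower-polyhedron analogs of Lemmas~\ref{lemma:EPD-typeA'} and~\ref{lemma:square-adjvertex}; the first requires classifying EPDs in lower polyhedra in the spirit of \cite[Theorem 6.4]{fkp:gutsjp} (which is simpler there, as lower polyhedra have no tentacles), and the second requires checking that the local argument in a tube around the link is insensitive to which polyhedron the square lies in.
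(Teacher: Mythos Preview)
Your overall strategy is sound, but you have made the argument harder than necessary by treating the lower-polyhedron squares before establishing uniqueness of $E$. The paper reverses this order, and that reversal eliminates both of the obstacles you flag at the end.

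Concretely: the paper first observes (exactly as you do at the end) that any upper-polyhedron square $Q\subset E$ encircles a single segment, so Lemma~\ref{lemma:square-adjvertex} applies at an endpoint of that segment and gives that $E$ is the \emph{only} component of $T\cap M_A$. Once this is known, the two boundary circles of $E$ on $\widetilde{S_A}$ are glued to each other under the collapse $\widetilde{S_A}\to S_A$, and therefore the two shaded sides of \emph{every} square of $E$ --- upper or lower --- are identified in $T$. This is precisely the conclusion you wanted from a lower-polyhedron analog of Lemma~\ref{lemma:square-adjvertex}, obtained for free without reproving anything.

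For the lower squares themselves, the paper also avoids any EPD classification or analog of Lemma~\ref{lemma:EPD-typeA'}. A lower polyhedron is the checkerboard polyhedron of an alternating tangle, so a diagrammatically compressible square $Q'$ there gives a closed curve meeting the diagram four times adjacent to two crossings; since $D(K)$ is $A$--twist reduced (Lemma~\ref{lemma:twist-reduced}), $Q'$ must encircle either a single ideal vertex or a string of bigons in a twist region. In the bigon case the two shaded sides of $Q'$ pass through \emph{distinct} vertices of the spine $\GA$ (the state circles on opposite sides of the twist region), so they cannot be homotopic in $S_A$ --- contradicting the gluing established in the previous paragraph.

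So your admitted gaps (lower-polyhedron versions of Lemmas~\ref{lemma:EPD-typeA'} and~\ref{lemma:square-adjvertex}) are not actually needed: prove uniqueness first, then invoke $A$--twist--reducedness directly.
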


\begin{proof}
By Lemma \ref{lemma:upper-idealvertex}, each diagrammatically
compressible square in the upper polyhedron bounds a single ideal
vertex.  It follows from Lemma \ref{lemma:square-adjvertex} that each
such square has one side in a shaded face glued to the other side when 
we re-glue $N(S_A)$.  Thus $E$ is the only annulus of $T \cap M_A$.

Each square $Q$ in the upper polyhedron is attached to a
diagrammatically compressible square $Q'$ in the lower polyhedron.  A
diagrammatically compressible square in the lower polyhedron
intersects the diagram graph exactly four times, adjacent to
crossings.  Because the diagram is $A$--twist reduced (Lemma
\ref{lemma:twist-reduced}), the square must encircle either an ideal
vertex or a string of bigons.

Suppose the square $Q'$ encircles a nonempty string of bigons.  Because
each side in a white face is glued to a square in the upper
polyhedron, and each square in the upper polyhedron has shaded sides
glued when we reattach sides of $N(S_A)$ to form $S^3\setminus K$, the
square in the lower polyhedron must also have its sides in shaded
faces glued when we form $S^3\setminus K$.

But now consider the way these sides run through the graph $\GA$,
which forms a spine for the surface $S_A$.  One of the curves runs
through a vertex associated with a state circle on one side of the
twist region, and the other runs through a distinct vertex on the
other side of the twist region.  Because the sides in white faces are
adjacent to distinct crossings, the curves cannot be homotopic in
$S_A$, hence they cannot be glued in the square.  This contradiction
implies that the square $Q'$ runs over a single ideal vertex.  
\end{proof}
%%%%%%%%%%%%%%%%%%%%%%%%%%%%%%%%%%%%%%%%%%%%%%%%%%%%%%%%%%%%%%%%%
%%%%%%%%%%%%%%%%%%%%%%%%%%%%%%%%%%%%%%%%%%%%%%%%%%%%%%%%%%%%%%%%%

We are now ready to show that semi-adequate links that satisfy the
hypothesis of Theorem \ref{thm:main} have atoroidal complements.

\begin{theorem}\label{thm:tori}
If $D(K)$ is a prime, $A$--adequate diagram such that $\GA$ satisfies
the 2--edge loop condition, then $S^3\setminus K$ contains no embedded
essential tori.
\end{theorem}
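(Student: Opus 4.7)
The plan is to prove the theorem by contradiction, stringing together the main results of Sections \ref{sec:annuli}, \ref{sec:incompr}, and \ref{sec:compr}. Suppose, for a contradiction, that $T \subset S^3 \setminus K$ is an embedded essential torus. By Lemma \ref{lemma:tori-cutto-annuli}, I may isotope $T$ into a nice position with respect to the polyhedral decomposition: $T \setminus \widetilde{S_A}$ consists of an even, nonzero number of essential annuli, half of them in $M_A$ (in normal form and cut into normal squares by the white faces) and half in the $I$--bundle $N(S_A)$. Label the $M_A$--annuli as $E_1, \ldots, E_k$ in the cyclic order induced by $T$, so that $E_i$ is adjacent to $E_{i+1}$ in the sense of Definition \ref{def:annulus-adjacent} (indices mod $k$).

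Next I would invoke Lemma \ref{lemma:incomprannuli}, which forbids two adjacent annuli in $T \cap M_A$ from both being diagrammatically incompressible. Consequently, at least one $E_i$, call it $E$, is diagrammatically compressible. Feeding this $E$ into Lemma \ref{lemma:lower-idealvertex} yields two strong consequences: first, $E$ is in fact the \emph{only} component of $T \cap M_A$, so that $k = 1$; second, every normal square comprising $E$, in both the upper and lower polyhedra it traverses, encircles exactly one ideal vertex of its ambient polyhedron.

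The remaining task, and the one I expect to be the main obstacle, is to convert these two conclusions into a contradiction with the essentiality of $T$. The plan is to show that $T$ must be boundary parallel. Any normal square $Q$ that encircles a single ideal vertex $v$ of a polyhedron $P$ bounds, together with a short arc of parabolic locus at $v$, a triangular disk in $P$ disjoint from the remaining white faces; in particular $Q$ can be isotoped across that disk into the parabolic locus of $M_A$. As one traverses $E$, consecutive squares are glued along shaded faces, and Lemma \ref{lemma:square-adjvertex} tells us that the corresponding shaded sides are in turn identified under the collapse $\widetilde{S_A} \to S_A$. The triangular disks should therefore fit together into a product region in $M_A$ between $E$ and an annulus lying on the parabolic locus $\partial M_A \setminus \widetilde{S_A}$. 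Re-attaching the single annulus of $T \cap N(S_A)$ that completes $T$ then exhibits $T$ as isotopic onto a component of $\partial N(K) \subset \partial(S^3 \setminus K)$, contradicting the assumption that $T$ is essential.

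The delicate point, and where I expect to spend the most effort, is checking that these triangular disks really glue up consistently to give a genuine product region between $E$ and $\partial M_A$, rather than a purely local isotopy at each square. In particular one must verify that, as the sequence of encircled ideal vertices is traced out by consecutive squares of $E$, the arcs of parabolic locus at these vertices join end to end along $\partial M_A$ into an annulus parallel to $E$, and that this parallelism survives the re-gluing of $N(S_A)$ to recover $S^3 \setminus K$.
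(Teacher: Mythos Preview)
Your proposal is correct and follows essentially the same route as the paper: invoke Lemma~\ref{lemma:tori-cutto-annuli} to cut $T$ into annuli, use Lemma~\ref{lemma:incomprannuli} to produce a diagrammatically compressible annulus $E$, then Lemma~\ref{lemma:lower-idealvertex} to conclude $E$ is the unique $M_A$--annulus and all its squares encircle single ideal vertices, and finally deduce that $T$ is boundary parallel. One small slip: consecutive squares of $E$ are glued along \emph{white} faces, not shaded faces; the shaded sides of each individual square are what get identified under the collapse $\widetilde{S_A}\to S_A$ (via Lemma~\ref{lemma:square-adjvertex}), and this is indeed what forces the encircled ideal vertices to line up into a single knot strand, making the ``delicate point'' you flag fairly routine.
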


\begin{proof}
Suppose $T$ is an embedded essential torus in $S^3\setminus K$.  By
Lemma \ref{lemma:tori-cutto-annuli}, we may take $T$ to be embedded in
such a way that $\widetilde{S_A}$ cuts it into an even number of
essential annuli, half embedded in $M_A$ and half embedded in the
$I$--bundle $N(S_A)$.  By Lemma \ref{lemma:incomprannuli}, $T \cap
M_A$ must contain a diagrammatically compressible annulus $E$.

But by Lemma \ref{lemma:lower-idealvertex}, if the essential torus
gives rise to a diagrammatically compressible annulus $E$, then $E$ is
the only component of $T \cap M_A$, and every normal square of $E$
encircles a single ideal vertex in its polyhedron.  In that case,
when we glue opposite sides of $N(S_A)$ to recover $S^3\setminus K$,
the sides of $E$ will be identified to encircle knot strands, and it
follows that the torus $T$ is actually boundary parallel, and not
essential.
\end{proof}

%%%%%%%%%%%%%%%%%%%%%%%%%%%%%%%%%%%%%%%%%%%%%%%%%%%%%%%%%%%%%%%%%
%%%%%%%%%%%%%%%%%%%%%%%%%%%%%%%%%%%%%%%%%%%%%%%%%%%%%%%%%%%%%%%%%

\section{Seifert fibered link complements}\label{sec:seifert}

Our work in the previous sections reduces the proof of the main result
to the case of atoroidal link complements. As we remarked in the introduction, the diagram $D(K)$ is assumed connected, which implies that $K$ is non-split and $S^3 \setminus K$ is irreducible. By work of Thurston \cite{thurston:bulletin}, an irreducible,
atoroidal $3$--manifold is either hyperbolic or Seifert fibered.  
To finish the proof of Theorem \ref{thm:main}  we need to treat the case of
Seifert fibered link complements. This is done in the following theorem.

\begin{theorem}\label{thm:SFL}
Let $D(K)$ be a prime, connected, $A$--adequate diagram such that
$\GA$ satisfies the 2--edge loop condition.  If $S^3 \setminus K$ is
Seifert fibered, then $D(K)$ is the standard diagram of a $(2,q)$
torus link.
\end{theorem}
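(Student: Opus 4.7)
My plan combines the atoroidal conclusion of Theorem~\ref{thm:tori} with the volume/Gromov-norm estimate underlying the corollary to Theorem~\ref{thm:main}, followed by a combinatorial analysis of $\GRA$. Since $D(K)$ is connected and prime, $K$ is non-split and $S^3\setminus K$ is irreducible. Combined with Theorem~\ref{thm:tori}, $S^3\setminus K$ is an irreducible, atoroidal, Seifert fibered 3-manifold embedded in $S^3$ with toroidal boundary. Standard facts about Seifert fibered link complements in $S^3$ then identify $K$ as a torus link $T(p,q)$ lying on a Heegaard torus of $S^3$.

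Next, Seifert fibered manifolds have vanishing Gromov norm, and hence vanishing simplicial volume. Combined with the inequality $\vol(S^3\setminus K)\geq -v_8\,\chi(\GRA)$ stated as a corollary of Theorem~\ref{thm:main} (which in turn rests on the guts-volume estimates of \cite{fkp:gutsjp}), this forces $\chi(\GRA)\geq 0$. Since $\GRA$ is connected (because $D(K)$ is) and simple (because $D(K)$ is $A$-adequate), it follows that $\GRA$ is either a tree or a connected graph with exactly one cycle.

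The remaining step is combinatorial. By the 2-edge loop hypothesis, distinct twist regions of $D(K)$ contribute disjoint sets of edges to $\GRA$: each twist region with short $A$-resolution contributes exactly one edge, while each twist region with long $A$-resolution contributes a path of $c_i\geq 1$ edges together with $c_i-1$ interior vertices. Combined with $\chi(\GRA)\geq 0$, the primality of $D(K)$, and the identification $K=T(p,q)$, a detailed case analysis should force $D(K)$ to have a single twist region with short $A$-resolution and exactly two state circles. By Definition~\ref{def:twist}, this means $D(K)$ is a closed 2-braid, hence the standard diagram of a $(2,q)$ torus link. The main obstacle is this final step, where the inequality $\chi(\GRA)\geq 0$ must be upgraded to a precise identification of the diagram, ruling out both non-standard diagrams of $T(2,q)$ and diagrams of $T(p,q)$ with $p\geq 3$. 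Turaev surface methods, as advertised in the introduction, should be useful here to organize the count of state circles across both the $A$- and $B$-resolutions, so that the Gromov-norm bound can be combined with constraints coming from the Turaev genus of the diagram.
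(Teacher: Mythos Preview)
Your proposal identifies the right ingredients but has two genuine gaps.

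First, the citation of the volume corollary is circular. That corollary is stated for hyperbolic links (otherwise $\vol$ is not defined), and it is a consequence of Theorem~\ref{thm:main}, which in turn depends on the very theorem you are trying to prove. What you actually need is the underlying Gromov--norm estimate: the guts formula $-\chi(\guts(S^3\setminus K,\,S_A)) = \max\{-\chi(\GRA),0\}$ from \cite[Corollary~5.19]{fkp:gutsjp}, combined with the Agol--Kuessner inequality $\|S^3\setminus K\| \geq -2\,\chi(\guts)$. Since a Seifert fibered manifold has $\|S^3\setminus K\| = 0$, this gives $\chi(\GRA)\geq 0$ directly, with no circularity and no appeal to hyperbolic volume.

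Second, and more seriously, your final step is not a proof but a wish list. You correctly flag it as ``the main obstacle'' and gesture toward Turaev surfaces, but the actual computation is missing. The paper carries this out cleanly and in a way rather different from your proposed case analysis of $\GRA$: one writes the Turaev genus as
\[
2g(D) = 2 - \chi(\GRA) + \bigl(e(\GA)-e(\GRA)\bigr) - f(\GA),
\]
observes that under the 2--edge loop condition every redundant edge of $\GA$ is cancelled by a bigon face, and that if $D(K)$ has more than one twist region there is at least one non-bigon face, so $(e(\GA)-e(\GRA))-f(\GA)\leq -1$. Combined with $\chi(\GRA)\geq 0$ this forces $g(D)=0$, hence $D(K)$ is alternating, and Menasco's theorem finishes. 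Your route via first identifying $K$ as a torus link $T(p,q)$ on a Heegaard torus and then attempting to constrain the diagram is harder, because knowing the link type does not directly control the given $A$--adequate diagram; the Turaev--genus argument sidesteps this by working entirely with the diagram. Note also that the invocation of Theorem~\ref{thm:tori} is unnecessary: the paper's proof of Theorem~\ref{thm:SFL} uses only the Seifert--fibered hypothesis and is logically independent of the atoroidal result.
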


\begin{proof}
Let $\GRA$ denote the reduced state graph of $D(K)$, obtained from
$\GA$ by removing all the duplicate edges.  Recall that the
\emph{guts} of $S^3 \setminus K$ relative to the surface $S_A$,
denoted $\guts(S^3 \setminus K, \, S_A)$, is the complement of the
maximal $I$--bundle in $M_A = S^3 \cut S_A$. In \cite[Corollary
  5.19]{fkp:gutsjp} we proved that, when all $2$--edge loops in $\GA$
belong to twist regions,
$$ -\chi (\guts(S^3 \setminus K, \, S_A) ) \: = \: \max \{ -\chi(\GRA), 0 \}.$$

Furthermore, the work of Agol \cite{agol:guts}, as generalized by
Kuessner \cite{kuessner:guts}, says that guts can be used to estimate
the Gromov norm of $S^3 \setminus K$:
$$ ||S^3\setminus K|| \geq - 2 \, \chi (\guts(S^3 \setminus K, \, S_A) ) \: \geq \:  -2 \, \chi (\GRA).$$

Recall that the Gromov norm $|| M ||$ of a $3$--manifold $M$ is
positive whenever the JSJ decomposition has one or more hyperbolic
pieces \cite{gromov, benepetronio}. In particular, if $S^3 \setminus
K$ is Seifert fibered, we have $||S^3\setminus K||=0$, hence
$\chi(\GRA) \geq 0$.

Next, recall that the graph $\GA$ can be given the structure of a
\emph{ribbon graph} \cite{dasbach-futer...}, and as such it can be
embedded on a standard closed orientable surface (called the
\emph{Tureav surface} of $D(K)$) so that it defines a cellulation
\cite{dasbach-futer..., turaevs}. The genus of this surface is called
the \emph {Turaev genus} of $D(K)$.  The Turaev genus $g(D)$ satisfies
$$ 2g(D) \: = \: 2-v(\GA)+ e(\GA)-f(\GA) \: = \: 2-\chi(\GRA)+ (e(\GA)-e(\GRA))-f(\GA),
$$ 
where $v(\GA), e(\GA), f(\GA)$ denote the number of vertices, edges
and faces, respectively, of the aforementioned cellulation, and
$e(\GRA)$ is the number of edges of $\GRA$.

Since 2--edge loops in $\GA$ belong to twist regions of $D(K)$, for
every edge in $e(\GA)-e(\GRA))$ there is a bigon face in $f(\GA)$ that
cancels that edge. Furthermore, if $D(K)$ has more than one twist
region --- if it is not the standard diagram of a $(2,q)$ torus link ---
there must also be at least one non-bigon face. Therefore,
$$(e(\GA)-e(\GRA))-f(\GA) \: \leq \: -1.$$
Furthermore, we have seen above that $\chi(\GRA) \geq 0$, hence
$$2g(D) \: = \: 2-\chi(\GRA)+ (e(\GA)-e(\GRA)-f(\GA)) \: \leq \:  2 - 0 -1.$$
Since $g(D)$ is a non-negative integer, we conclude that $g(D)=0$.

This in turn, implies that the diagram $D$ is alternating; see
Corollary 4.6 of \cite{dasbach-futer...}.  Thus $D$ a prime,
alternating diagram that represents a Seifert fibered link.  Now the
work of Menasco \cite{menasco:incompress} implies that $D$ is the standard diagram of
a $(2,q)$ torus link.
\end{proof}

Theorem \ref{thm:main} follows immediately by combining Theorem
\ref{thm:tori}, Theorem \ref{thm:SFL}, and Thurston's hyperbolization
theorem for link complements \cite{thurston:bulletin}.
Now we finish the proofs of Corollaries \ref{braids} and \ref{cor:prime}.

\begin{proof}[Proof of Corollary \ref{braids}]
Without loss of generality, assume $r_j \geq 3$ for all $j$. Then  the diagram $D_b$ is a
prime, $A$--adequate diagram and  the corresponding state graph $\GA$
contains no 2--edge loops at all.  Thus Theorem \ref{thm:main} implies
that $K$ is hyperbolic.
\end{proof}

\begin{proof}[Proof of Corollary \ref{cor:prime}]
In  \cite[Corollary 3.21]{fkp:gutsjp}, we show that for a
non-split, prime link $K$, any semi-adequate diagram $D(K)$
without nugatory crossings must be prime.

Conversely, if $D(K)$ is prime and semi-adequate, then Theorem
\ref{thm:main} implies $K$ is hyperbolic or a $(2,q)$ torus link.  Hence the link must also be
prime.
\end{proof}

\bibliographystyle{hamsplain} \bibliography{biblio}

\end{document}